\newcommand\cyr
\renewcommand\rmdefault{wncyr}
\renewcommand\sfdefault{wncyss}
\renewcommand\encodingdefault{OT2}
\DeclareTextFontCommand{\textcyr}{\cyr}
\def\@settitle
\theoremstyle{plain} 
\newtheorem{thm}{Theorem}
\newtheorem{cor}[thm]{Corollary} 
\newtheorem{lem}[thm]{Lemma}
\newtheorem{prop}[thm]{Proposition}
\theoremstyle{definition}
\newtheorem{oss}[thm]{Remark}
\newtheorem{example}{Example}[section]
\newtheorem*{assump*}{Assumptions}
\newcommand{\R}{\ensuremath{\mathbb{R}}} %The real numbers
\newcommand{\Rn}{\ensuremath{\mathbb{R}^n}} %The n dimensional real numbers
\newcommand{\eps}{\ensuremath{\varepsilon}} % A shorter name for varepsilon
 \newcommand{\eqlab}[1]{\begin{equation}  \begin{aligned}#1 \end{aligned}\end{equation}} % Equation with label and aligned
\newcommand{\bgs}[1]{\begin{equation*} \begin{aligned}#1\end{aligned}\end{equation*}} % Equation with aligned
  \newcommand{\sys}[2][]{\begin{equation*}#1  \left\{\begin{aligned}#2\end{aligned}\right.\end{equation*}}%System with  the possibility of using the graph parenthesis and well aligned ex: \sys[f(x)=]{1, if x=0 \\ 2, if x=1.}
\newcommand{\E}{\ensuremath{\mathcal{E}}}
\newcommand{\C}{\ensuremath{\mathcal{C}}}
\newcommand{\K}{\ensuremath{\mathcal{K}}}
\newcommand{\X}{\ensuremath{\mathcal{X}}}
\newcommand{\B}{\ensuremath{\mathcal W^{s,p}_{R,\varphi} }}
\numberwithin{equation}{section} 
\title[A symmetry result in $\R^2$ for a general  type of nonlocal energy]{A symmetry result in $\R^2$ for global minimizers of a general  type of nonlocal energy}
\subjclass{Primary: 3547G10, 35R11. Secondary:35B08.}
\keywords{Nonlocal energy, one dimensional solutions, Allen-Cahn.}
\author[C. Bucur]{Claudia Bucur}
\address[C. Bucur]{
	Istituto Nazionale di Alta Matematica (INdAM)}
\email{claudia.bucur@aol.com}
\thanks{The author is member
	of {\em Gruppo Nazionale per l'Analisi Ma\-te\-ma\-ti\-ca, la Probabilit\`a e le loro Applicazioni} (GNAMPA) 
	of the {\em Istituto Nazionale di Alta Matematica} (INdAM)}
\thanks{I sincerely thank Enrico Valdinoci and Luca Lombardini for their very useful suggestions.}
\begin{document}
\maketitle

 \begin{abstract}In this paper, we are interested in a general type of nonlocal energy, defined on a ball $B_R\subset \Rn$ for some $R>0$ as
 \[ \E (u, B_R)= \iint_{\R^{2n}\setminus (\C B_R)^2} F( u(x)-u(y),x-y)\, dx \, dy+\int_{B_R} W(u)\, dx.\]  We prove that  in $\R^2$, under suitable assumptions on the functions $F$ and $W$, % and on the potential $W,$ 
 bounded continuous global energy minimizers are one-dimensional. This proves a De Giorgi conjecture for minimizers in dimension two, for a general type of nonlocal energy. 
	\end{abstract}
%\tableofcontents
\section{Introduction}
In this paper we deal with  %bounded continuous global minimizers of 
a general type of nonlocal energy. Let
\[ F\colon \R \times \Rn \setminus \{0\} \to [0,+\infty), \qquad W: [-1,1] \to [0,+\infty) \]  
be two functions and let $R>0$. Denoting as usual
\[ B_R =  \big\{ x\in \Rn \; \big| \; |x|<R\big \}, \qquad \C B_R = \Rn \setminus B_R,\]
we consider for any $|u|\leq 1,$
	\eqlab{\label{energy} 
		\E(u,B_R):= \K_R(u)  +\int_{B_R} W(u)\, dx,  
		}
with
	\eqlab{\label{kinn} 
	 	\K_R(u):= \iint_{\R^{2n}\setminus (\C B_R)^2} F(   u(x)-u(y),x-y)\, dx \, dy .
	} 
%where $\C B_R$ is the usual notation for the complement of $B_R:=\{ x\in \Rn \, \big| \, |x|<R\}$. 
Under suitable assumptions on $F$ and $W,$ we prove that for $n=2$  continuous functions $u \colon \R^n\to [-1,1]$, %which are  of finite energy and 
minimizing the energy $\E(\cdot,B_R)$ for any $R>0$, %(i.e. they minimize $\E(\cdot, B_R)$ among all functions that agree on $\C B_R$). 
 are one-dimensional. We say that $u$ is one-dimensional if every level set of $u$ is a hyperplane, or in other terms, if there exists $u_0 \colon \R \to [-1,1] $ such that 
 \[ u(x)= u_0(x\cdot  \omega) \qquad \mbox{ for some } \; \; \omega \in \partial B_1.\]

This type of energy naturally arises in a phase transition problem, which leads to  
% For instance, consider a container filled with a fluid that can reach two pure phases. Let the function $u$ denote the density of the fluid at a given point in the container. In this context, the energy of the system is the sum of a potential energy (defined by a potential $W$) and a ``kinetic'' energy. The latter takes into consideration the interactions (even at long distances) and penalizes the formation of unnecessary interfaces. A classical formulation of a phase transition problem like the one just described is given 
 the well-known stationary Allen-Cahn equation 
	\eqlab{ \label{acclass}
		(-\Delta) u= u-u^3 \quad \mbox{ in } \, \Rn.
		}
%whose related energy in some ball $B_R\subset\Rn$ is given by
%	\eqlab{ \label{classen} 
%	\E(u,B_R) = \int_{B_R} \frac{1}2|\nabla u|^2 + W(u) \, dx. 
%		}
%Here the function $W$ is a double-well potential
%	\bgs{\label{DF-WELL}
%		W(u):=\displaystyle \frac{(u^2-1)^2}{4}.
%		}  
%In this context, one looks for (local) minimizers of the energy \eqref{classen} in $B_R$, i.e. functions of finite energy that minimize the energy in $B_R$ among all functions that agree on $\partial B_R$. A global minimizer is a (local) minimizer in any ball. 
 
%An acclaimed conjecture related to the Allen-Cahn equation 
The Italian mathematician Ennio De Giorgi conjectured in 1978 that any smooth, bounded solution of this  equation which is monotone in one direction is one-dimensional, at least if $n\leq 8$. The interested reader can check \cite{SavinSurvey} for a very nice survey on phase transitions, minimal surfaces, the Bernstein problem,  %(a geometric conjecture related to global minimal graphs), 
since the connection between these problems is the reason why the dimension eight comes into play. For a further very nice reference, see \cite{CintiSurvey}.   \\ 
This De Giorgi conjecture has received much attention in the last decades, and has been completely settled for $n\leq 3$, see~\cite{AC00, BCN97,GGUI}. The case  $4\leq n \leq 8$ with the additional assumption that 	
\begin{equation} \label{limdgs}
 \lim_{x_n\to \pm \infty} u(x',x_n)=\pm 1, \quad \mbox{for any} \quad x'\in \R^{n-1}
\end{equation}
was proved in \cite{flatty}. On the other hand, an example showing that the De Giorgi conjecture does not hold in higher dimensions (i.e. for $n\geq 9$) can be found in \cite{PKW08}.
 
A model that accounts for long range interactions is given by the nonlocal, fractional counterpart of the Allen-Cahn equation 
	\eqlab{ \label{brr}
		 (-\Delta)^s u= u-u^3 \quad \mbox{ in } \, \Rn,
	}
	with $s\in (0,1)$. The operator $(-\Delta)^s$ denotes the fractional Laplacian %(see e.g. \cite{hitch,nonlocal,silvestre}),
	 defined as
	\bgs{\label{frlap}
		 (-\Delta)^s u (x)=C_{n,s}\int_{\Rn} \frac{2u(x)- u(x+y)-u(x-y)}{|y|^{n+2s}} \, dy, \quad \mbox { with }  \; C_{n,s}>0.
		 } 
		 
%The energy on a ball $B_R$ related to the (fractional) equation is 	then						\eqlab{\label{frlapvv} 
%			\E(u,B_R) = {\frac{1}2} \iint_{\R^{2n} \setminus (\C B_R)^2}\frac{|u(x)-u(y)|^2}{|x-y|^{n+2s}}\, dx \, dy + \int_{B_R}W(u) \, dx . 
%	} 
%	Here, (local) minimizers of the energy in \eqref{frlapvv} in $B_R$  are functions of finite energy that minimize the energy among all functions that agree on the complement of $B_R$. Again, a global minimizer is a (local) minimizer in any ball. 

An analogue of the De Giorgi conjecture for any smooth, bounded, monotone solution of the fractional Allen-Cahn equation has first been proved for $n=2, s={1}/{2}$ is \cite{CM05}. In the case $n=2$, for any $s \in (0,1)$, the result is proved in \cite{CS15,SV09}. When $n=3$, the papers \cite{CC10, CC14} contain the proof for $s\in \big[{1}/{2},1\big)$, \cite{dipierro2017three,improvement} for  $s\in \big(0,{1}/{2}\big)$, and \cite{monoton} for a general $s\in (0,1)$.
For $n=4$ and $s={1}/2$ a proof of the conjecture is given in \cite{figalli}. On the other hand, for $4\leq n\leq 8$ and $s \in\big[{1}/{2},1\big)$ the conjecture is proved with the additional assumption \eqref{limdgs} in \cite{savin12,rigidity}. For $s$ in this range, only a counterexample for $n=9$ is missing to complete the picture.

\smallskip

One way to tackle the De Giorgi conjecture is to study global minimizers of the Ginzburg-Landau energy functional and understand whether they are one-dimensional. For the Allen-Cahn equation \eqref{acclass}, the related energy in some ball $B_R\subset\Rn$ is given by
	\bgs{ \label{classen} 
	\E(u,B_R) = \int_{B_R} \frac{1}2|\nabla u|^2 + W(u) \, dx,
		}
with $W$ being the  double-well potential
	\bgs{\label{DF-WELL}
		W(u)=\displaystyle \frac{(u^2-1)^2}{4}.
		}  
Actually, the potential term $W$ can denote any function with a double-well structure, that is
	\eqlab{
	\label{www1}
	 &W \colon[-1,1] \to [0,+\infty), \quad W\in C^2([-1,1]), \quad  W>0 \; \; \mbox{ in }\;  (-1,1) , \\ 
	 &W(\pm 1) = W'(\pm 1) =0,  \quad W''(\pm 1) >0.
	} 		
A local minimizer $u$ of the energy $\E(\cdot, B_R)$ is such that
$
 \E(u, B_R) \leq \E(v,B_R)$ for any $v=u$ on $\partial B_R.
 $ 
A global minimizer is a local minimizer for any $R>0$. 
It turns out that global minimizers of the Ginzburg-Landau energy (with $W$ as in \eqref{www1}) are one-dimensional for $n\leq 7$, see \cite{flatty} or \cite[Theorem 10.1]{SavinSurvey}. In fact, Savin proves the conjecture for global minimizers, and uses the additional assumption \eqref{limdgs} to go from global minimizers to solutions.  

 The nonlocal energy related to problem \eqref{brr} is 
		\eqlab{
		\label{frlapvv} 
			\E^s(u,B_R) = {\frac{1}2} \iint_{\R^{2n} \setminus (\C B_R)^2}\frac{|u(x)-u(y)|^2}{|x-y|^{n+2s}}\, dx \, dy + \int_{B_R}W(u) \, dx ,
	} 
	with $W$ satisfying \eqref{www1}.
	Here, a local minimizer $u$ of the energy $\E^s(\cdot, B_R)$ is such that
\[
 \E^s(u, B_R) \leq \E^s(v,B_R) \qquad \mbox{ for any }\;\; v=u \quad \mbox{ in } \; \; \C B_R,
 \] 
 and a global minimizer is a local minimizer in any ball. That nonlocal minimizers are one-dimensional  is proved for $n\leq 7$ and $s\in [1/2,1)$ in \cite[Theorems 1.2]{savin12,rigidity}, and the conjecture for solutions is settled (as in the classical case) by using the additional assumption \eqref{limdgs}. 
 
 In other references \cite{improvement,figalli,monoton}, the authors prove the conjecture with different techniques (for critical points of the energy, or for stable solutions). In \cite{CC10, CC14,CS15,CM05,dipierro2017three,SV09}, again various techniques are employed, but all rely on the use of the harmonic extension for the fractional Laplacian. However, there is not an ``extension procedure'' for general nonlocal operators, hence such methods are specific to the fractional Laplacian case. 
 On the other hand, in \cite[Theorem 4.2.1]{nonlocal}, the present author with Valdinoci carry out the proof of the conjecture for minimizers for $n=2$ in the nonlocal setting, thus without the harmonic extension. This allows to develop the technique therein introduced, and take the nonlocal energy in \eqref{frlapvv} to a much more general form. 
 
 We thus prove  the conjecture for global minimizers for $n=2$, for the general nonlocal energy given in \eqref{energy}. As a matter of fact, the results here introduced find as an immediate application the study of the energy related to \eqref{frlapvv}. Furthermore, the result applies also to more engaging equations, involving for instance the fractional $p$-Laplacian, or the mean curvature equation (as we see in Section \ref{examples}).

{We mention for the Allen-Cahn equation with general kernels  the papers \cite{cozzpass} and \cite{ros2015entire}. While reviewing this paper, we learned about the result reached in \cite{ros2015entire}. There, the one-dimensional property of stable solutions is proved in $\R^2$ for the operator
\[\mathcal L u(x)= P.V. \int_{\Rn} (u(x)-u(x+y)) K(y) \, dy
\]
under some assumptions on the kernel $K$,  and by using a Liouville theorem approach.} 

 \medskip
 
We organize the rest of the paper as follows. Section \ref{fff} contains the main result and the assumptions on the function $F$. {In Section \ref{missing} we deal with the existence of minimizers of the nonlocal general energy \eqref{energy} in an suitable functional setting. We discuss also some form of a strong comparison principle (i.e. if two ordered minimizers coincide on a small ball, then they coincide in the whole space)}. In Section \ref{prelim} we introduce some energy estimates, which will contribute to the proof of the main result (Theorem \ref{Theorem}) in Section \ref{thmm}.  We give two examples of functions $F$ that satisfy our assumptions in Section \ref{examples}. {As a matter of fact, in this last section we obtain that continuous bounded minimizers of the energy related to the fractional $p$-Laplacian and to the fractional mean curvature are one-dimensional in $\R^2$. In other words, we prove the De Giorgi conjecture for minimizers in dimension two (also) for the fractional $p$-Laplacian and the mean curvature.}

\section{Main result and assumptions on $F$}\label{fff}

We fix some $s\in (0,1)$ and $p\in [1,+\infty)$. 
We consider $F:\R\times \Rn\setminus \{0\} \to [0,+\infty)$ and denote by $t\in \R$, $x=(x_1,\dots,x_n)\in \Rn\setminus\{0\}$ its variables. 
 \begin{assump*}\label{fuffa} In this paper, $F$ satisfies the following on its domain of definition.
 \begin{itemize}

\item symmetry
		\eqlab{ 
			\label{sym} 
				F (t,x) =F(-t,x)=F(-t,-x) ,
			}
\item monotonicity in $t$
		\eqlab{\label{mon}
				  F(t_1, x)\leq F(t_2,x) \;\, \mbox{ for any } 
				  \; |t_1|\leq| t_2|, 
		  }
\item monotonicity in $x$
		\eqlab{
			\label{mon2} 
				 F(t, x_1)\leq F(t,x_2)\;\, \mbox{ for any }
				 \; |x_1|\geq | x_2|, 
			}
\item {scaling in $x$
		\eqlab{
			\label{homo} 
			 F\left(t,\alpha x\right)\leq \alpha^{-n-sp-1}  
			 F(t,x)\; \, \mbox{ for any } \alpha \in (0,1], 
	 }}
\item integrability: there exist $c_*,c^*>0$ such that
	{
		\eqlab{  \label{integr}  
	&c_* \left(\frac{|t|^p}{|x|^{n+sp} } - \frac{1}{|x|^{n+sp-p} }\right)
		%c_* \frac{|t|^p}{|x|^{n+sp} }\min\left\{ 1, \frac{|t|^p}{|x|^p} \right\}  
		\leq  
		F(t,x) 	
				 \leq c^* \frac{|t|^p}{|x|^{n+sp}},
				 }
		 }
\item  		smoothness in $x$ 
		\eqlab{
			\label{C2}     
				 &F(t, \cdot)\in C^2\left(\Rn \setminus \{0\}\right) 
				,
				 }
\item growth of the partial derivative in $x$: there exists $c_1>0$ such that
		\eqlab{ 
			\label{partial1} 
		  		\Big| \partial _{x_i} F (t,x)\Big|  \leq c_1 
		  		\frac{F(t,x)}{|x|} \;\,\, \mbox{ for any } i=1,2,\dots,n,
		}
\item growth of the second order partial derivative in $x$: there exists $c_2>0$ such that
		\eqlab{\label{partial2}   
			   \Big|   \partial^2_{x_i} F(t,x)\Big|  \leq 
		 	   c_2 \frac{F(t,x)}{|x|^2} \;\,  
			    \mbox{ for any } i=1,2,\dots,n, 
		    }
  \item  		smoothness in $t$
		\eqlab{
			\label{C2t} F(\cdot,x)\in C^1\left(\R \right) \;
				 \mbox{ for a.e. } x \in \Rn,
				} 
\item {growth of the derivative in $t$: there exists $c_3>0$ such that
		\eqlab{ 
			\label{1derivt} 
		  		\Big| \partial _{t} F (t,x)\Big|  \leq c_3 
		  		\frac{|t|^{p-1}}{|x|^{n+sp}},
		}}
		
		\item {strict monotonicity of $\partial_t F(t,x)$: 
			\eqlab{
				 \label{uconvex}
				\partial_t F(T,x) > \partial_t F(\tau,x),  \quad  \; &\mbox{ whenever }T> \tau,
	\mbox{ for any } x \in \Rn\setminus \{0\}.
				}}
\end{itemize}

 \end{assump*}

\noindent Let 
  \eqlab{
	\label{www}
	 &W \colon[-1,1] \to [0,+\infty), \quad W\in C^1([-1,1]), \quad  %W>0 \; \; \mbox{ in }\;  (-1,1) , \\ 
	% &
	 W(\pm 1) = W'(\pm 1) =0.
	} 	 
  When $W$ satisfies \eqref{www}, we say that $u\colon\Rn \to [-1,1]$ is a minimizer for $\E(\cdot,B_R)$ given in \eqref{energy} if $\E(u, B_R)<\infty$ and if it minimizes $\E(\cdot,B_R)$ among all admissible competitors, i.e. 
\[ \E(u,B_R) \leq \E(v,B_R) \qquad \mbox{ for any }\; v \; \mbox{ such that } \quad |v|\leq 1 \quad \mbox{ and } \quad v=u \quad \mbox{ in } \; \; \C B_R.
 \] 
We say that $u\colon\Rn \to [-1,1]$ is a global minimizer for $\E$ if it is a minimizer for $\E(\cdot, B_R)$ for any $R>0$. 
\smallskip 
The main result of the paper is the following.
\begin{thm}\label{Theorem}
Let $u\colon \Rn \to [-1,1]$ be a continuous global minimizer of the energy \eqref{energy}. Then under the assumptions \eqref{sym} to 
%\eqref{uconvex} and 
\eqref{www}, $u$ is one-dimensional.
%, i.e. there exists $\omega\in S^1$ and $u_0\colon \R\to \R$ such that
%\[ u(x)=u_0(\omega\cdot x) \quad \mbox{ for any } x\in \R^2.\]
\end{thm}

\medskip

Notice that the assumptions on $F$ give a generalization of the energy  in \eqref{frlapvv}, related to the fractional Laplacian. { Moreover, we prove in the last Section \ref{examples}, they are all natural conditions when we consider a nonlocal energy like the one related to the fractional $p$-Laplacian or the fractional mean curvature operator.}

%%%%%%%%%%%%%%%%%%SECTION%%%%%%%%%%%%%%%

\section{{Existence and comparison of minimizers} }

\label{missing} 
Let $p\in[1,+\infty), s\in (0,1), R>0$ and let $ u\colon \Rn \to \R$ be a measurable function. We consider $F\colon \R \times \Rn\setminus \{0\}\to [0,+\infty)$ to be such that it satisfies at least \eqref{sym} and \eqref{integr} (other assumptions will be mentioned, when needed). Furthermore, let
\eqlab{
	\label{wloc}
	W \colon \R \to [0,+\infty), \qquad W\in L^\infty(\R)\cap C^1(\R)
	} 
	and let $\E(\cdot, B_R)$ be defined by the formula \eqref{energy}.
\\We begin by describing the functional framework for the existence (for further reference, check \cite{hitch}). Let
	\[
	W^{s,p}(\Omega) := \big\{ u\in L^p(\Omega) \; \big| \; [u]_{W^{s,p}(\Omega)} <\infty \big\}
	\]
where 
\[ 		
		[u]_{W^{s,p}(\Omega) }= \left(\int_\Omega \int_\Omega \frac{|u(x)-u(y)|^p}{|x-y|^{n+sp} }\, dx \, dy\right)^{\frac1p}
				\]
			is the Gagliardo semi-norm. Also, we denote 
				\[ 
				\|u\|_{W^{s,p}(\Omega)} :=\left(\|u\|^p_{L^p(\Omega)} +[u]^p_{W^{s,p}(\Omega) }\right)^{\frac1p}.\]
			  We define 
 	\bgs{ \mathcal X_R:= \big\{ \varphi \colon \Rn \to \R \; \big| \;  
\varphi \in L^\infty(\Rn)
 \cap W^{s,p}(B_{2R}) \big\}
  }
and denote
\[ [u]_{R,\varphi}:=\left(\int_{B_R} \left( \int_{B_{2R}\setminus B_R} \frac{|u(x)-\varphi(y)|^p}{|x-y|^{n+sp} }\, dy\right) \, dx  \right)^{\frac1p}.\] 
For $\varphi\in \X_R$, let
		 \eqlab{\label{thenorm}
		\mathcal {W}^{s,p}_{R,\varphi}:= \big\{u \colon \Rn \to \R \;\big| \;  u \in W^{s,p}(B_R),
		\,[u]_{R,\varphi}<\infty  
		\mbox{ and }u=\varphi \mbox{ on } \C B_R 
		\big\}.
		}
For $u\in \B$, when we say that $u$ is a minimizer for $\E(\cdot, B_R)$ it is implied that $u$ is a minimizer with respect to the fixed exterior data $\varphi$.
For the sake of precision, we recall that a measurable function $u\colon\Rn \to \R$ is a minimizer for $\E$ in $B_R$ if $\E(u, B_R)<\infty$ and   
\[ \E(u,B_R) \leq \E(v,B_R) \qquad \mbox{ for any }\;\; v=u \quad \mbox{ in } \; \; \C B_R.
 \] 
For any two sets $A, B\subset \Rn$, we define
	\[
		 u(A,B):= \int_{ A}\int_B F( u(x)-u(y),x-y)\, dx \, dy,
		 \]
		 and recall from \eqref{kinn} that
		 \[ \K_R(u)= u(B_R,B_R)+ u(B_R,\C B_R)+ u(\C B_R,B_R)
.\]
We have the next useful result.
\begin{prop}\label{finiteenergy}
If $\varphi\in \X_R$ and $u\in \B$, then there exists a positive constant $C$ depending on $n,s,p,R, \|W\|_{L^\infty(\R)}, \|\varphi\|_{L^\infty(\Rn)} $   such that
\[ \E(u,B_R)\leq  C \left( \|u\|^p_{W^{s,p}(B_{R})} 
			 +[u]^p_{R,\varphi}+  1 \right).\]
Moreover, it holds that
	\eqlab{\label{krusym}
		\K_R(u) = u(B_R,B_R) + 2u(B_R,\C B_R). 
		}
\end{prop}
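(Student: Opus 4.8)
The plan is to split the energy $\E(u,B_R)=\K_R(u)+\int_{B_R}W(u)\,dx$ and bound each piece separately, the potential term being immediate from $W\in L^\infty(\R)$ and $|B_R|<\infty$, namely $\int_{B_R}W(u)\,dx\le \|W\|_{L^\infty(\R)}\,|B_R|$. So the work is entirely in controlling $\K_R(u)$, and for that I would first establish the decomposition \eqref{krusym}. Writing out $\R^{2n}\setminus(\C B_R)^2 = (B_R\times B_R)\cup (B_R\times \C B_R)\cup(\C B_R\times B_R)$ as a disjoint union, one gets $\K_R(u)=u(B_R,B_R)+u(B_R,\C B_R)+u(\C B_R,B_R)$; then the symmetry assumption \eqref{sym}, specifically $F(u(x)-u(y),x-y)=F(u(y)-u(x),y-x)$, gives $u(A,B)=u(B,A)$ for any measurable $A,B$, so the last two terms coincide, yielding \eqref{krusym}.

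Next I would bound each of the two surviving terms using the upper estimate in \eqref{integr}, $F(t,x)\le c^*\,|t|^p/|x|^{n+sp}$. For the local term, $u(B_R,B_R)\le c^*\iint_{B_R\times B_R}\frac{|u(x)-u(y)|^p}{|x-y|^{n+sp}}\,dx\,dy = c^*\,[u]^p_{W^{s,p}(B_R)}\le c^*\,\|u\|^p_{W^{s,p}(B_R)}$, which is finite since $u\in\B$. For the interaction term $u(B_R,\C B_R)$, I would split the outer integration over $\C B_R$ into $B_{2R}\setminus B_R$ and $\C B_{2R}$. On $B_R\times(B_{2R}\setminus B_R)$ we have $u=\varphi$ on the second factor, so this piece is $\le c^*\int_{B_R}\int_{B_{2R}\setminus B_R}\frac{|u(x)-\varphi(y)|^p}{|x-y|^{n+sp}}\,dy\,dx = c^*\,[u]^p_{R,\varphi}$, which is finite by definition of $\B$. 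On $B_R\times \C B_{2R}$ one uses that $|x-y|\ge |y|-|x|\ge |y|/2$ for $x\in B_R$, $y\in\C B_{2R}$, together with $|u(x)-u(y)|^p\le 2^{p-1}(|u(x)|^p+|u(y)|^p)\le 2^p\|u\|_{L^\infty(\Rn)}^p$ (recall $u\in L^\infty$), so this term is bounded by $C(n,s,p)\,\|u\|_{L^\infty(\Rn)}^p\,|B_R|\int_{\C B_{2R}}|y|^{-n-sp}\,dy$, and the last integral converges at infinity since $sp>0$, giving a constant depending on $n,s,p,R$ and $\|\varphi\|_{L^\infty(\Rn)}$ (as $\|u\|_{L^\infty(\Rn)}=\|\varphi\|_{L^\infty(\Rn)}$ outside $B_R$ and $u$ is a bounded minimizer — in fact for the \emph{a priori} bound one may simply use $|u|\le 1$, or absorb $\|u\|_{L^\infty}$ into the $W^{s,p}(B_R)$ norm on $B_R$ by interpolation). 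Collecting the three bounds and the potential term yields the claimed inequality.

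I do not expect any genuine obstacle here: the only mild subtlety is the far-field interaction $u(B_R,\C B_{2R})$, where one must be slightly careful that the bound is expressed in terms of the allowed quantities — this is why the hypothesis $u\in L^\infty(\Rn)$ (equivalently, the exterior datum $\varphi\in L^\infty$ plus $|u|\le 1$) is used, rather than merely $u\in W^{s,p}(B_R)$. Everything else is a routine application of \eqref{integr} and \eqref{sym} together with the finiteness built into the definition \eqref{thenorm} of $\B$.
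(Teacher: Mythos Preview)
Your approach is essentially the paper's: the decomposition \eqref{krusym} via \eqref{sym}, the bound $u(B_R,B_R)\le c^*[u]^p_{W^{s,p}(B_R)}$, the near-field interaction bound $u(B_R,B_{2R}\setminus B_R)\le c^*[u]^p_{R,\varphi}$, and the potential bound are all correct and match the paper. The one step that does not go through as written is the far-field piece $u(B_R,\C B_{2R})$. You invoke $\|u\|_{L^\infty(\Rn)}$, but membership in $\B$ does \emph{not} force $u\in L^\infty(B_R)$ --- the definition only gives $u|_{B_R}\in W^{s,p}(B_R)$ and $u=\varphi\in L^\infty$ on $\C B_R$. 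None of your parenthetical fixes work at this level of generality: $|u|\le 1$ is not a hypothesis of this proposition, and there is no interpolation recovering an $L^\infty$ bound from the $W^{s,p}(B_R)$ norm.

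The remedy is immediate and is exactly what the paper does: after splitting $|u(x)-\varphi(y)|^p\le 2^{p-1}\big(|u(x)|^p+|\varphi(y)|^p\big)$ for $x\in B_R$, $y\in\C B_{2R}$, do not pass to a sup norm on the first term. Integrate it instead: using $|x-y|\ge |y|/2$ one gets
\[
\int_{B_R}\!\int_{\C B_{2R}}\frac{|u(x)|^p}{|x-y|^{n+sp}}\,dy\,dx
\;\le\; C_{n,s,p,R}\,\|u\|_{L^p(B_R)}^p,
\]
which is already part of $\|u\|_{W^{s,p}(B_R)}^p$, while the second term contributes $C_{n,s,p,R}\,\|\varphi\|_{L^\infty(\Rn)}^p$. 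With this correction the constant depends only on the quantities announced in the statement.
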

\begin{proof}
By the right hand side of \eqref{integr} we have that 
	\bgs{\label{fuck1}
		& u(B_R,B_R) + 2u(B_R,B_{2R} \setminus B_R)\\
					\leq &\; c^* \int_{B_R}\int_{B_R}  \frac{|u(x)-u(y)|^p}{|x-y|^{n+sp}}  \, dx \, dy + 2 c^*\int_{B_R}\int_{B_{2R} \setminus B_R} \frac{|u(x)-\varphi(y)|^p}{|x-y|^{n+sp}}  \, dx \, dy
			\\		 
			   \leq &\;  c^*\left( [u]^p_{W^{s,p}(B_{R})} 
			 +2[u]^p_{R,\varphi}\right).
			%\int_{B_R}\int_{B_{2R}\setminus B_R} \frac{|u(x)-\varphi(y)|^p}{|x-y|^{n+sp} }\, dx \, dy 
				}
		When $x\in B_R, y\in \C B_{2R}$, we have that $|x-y|\geq |y|/2$, hence
	\bgs{\label{fuck2}
		 u(B_R,\C B_{2R}) \leq &\; 
		c^* \int_{B_R} \int_{\C B_{2R}} \frac{|u(x)-\varphi(y)|^p}{|x-y|^{n+sp}}  \, dx \, dy
		 \\ 
		 \leq &\; 2^{p-1}  c^*  \left( \int_{B_R} |u(x)|^p \int_{\C B_{2R} } \frac{ dx \, dy}{|x-y|^{n+sp}} +  
		 \int_{B_R}  \int_{\C B_{2R} } \frac{ |\varphi(y) |^p}{|x-y|^{n+sp}}\,dx \, dy\right)
		 \\
		 \leq &\; 
		 	2^{p-1+n+sp}c^* \left(  \|u\|^p_{L^p(B_R)} + \|\varphi\|^p_{L^{\infty}(\C B_{2R})}|B_R|\right)  \int_{\C B_{2R}} |y|^{-n-sp}\, dy 
		 	\\
		 	\leq &\; C_{n,s,p,R}\left(  \|u\|^p_{L^p(B_R)} + \|\varphi\|^p_{L^{\infty}(\C B_{2R})}\right). }
		 	Therefore we obtain
		 	\eqlab{ \label{krrr}
		 	\K_R(u)\leq C_{n,s,p,R}\left( \|u\|^p_{W^{s,p}(B_R)} 
			 +[u]^p_{R,\varphi}+ \|\varphi\|^p_{L^{\infty}(\C B_{2R})}\right).
			 }
%Notice at first that if $u\in \B$ the energy is well defined in any ball. 
	%	C(n,s,p,R, \|\varphi\|_{L^{\infty}(\C B_R)}). 
%		\]
It is enough then to notice that
	\[ 
	\int_{B_R} W(u) \, dx \leq C_{n,R}\|W\|_{L^\infty(\R)}
	\]
	to conclude the first statement of the Proposition. \\
	On the other hand, changing variables, using Fubini to change the order of integration and applying \eqref{sym}, we obtain
	\bgs{ 
	&u(\C B_R, B_R) 
	\\
		=&\; \int_{\C B_R} \left( \int_{B_R} F(u(x)-u(y),x-y) \, dy \right)\, dx 
		= \int_{\C B_R} \left( \int_{B_R} F(u(y)-u(x), y-x) \, dx\right)  \, dy
		\\
		=&\;  \int_{B_R} \left(\int_{\C  B_R} F(u(x)-u(y), x-y) \, dy \right)\, dx
		= u(B_R,\C B_R),
	} 
	from which \eqref{krusym} immediately follows.
	\end{proof}

We give in the next proposition some a priori properties of the minimizers of the energy. 
%, which entitle us to look for minimizers for the given exterior data, among functions $u\in \B$.  %We suppose here that $W$ satisfies \eqref{wloc}.
 \begin{prop}\label{apriori} If $\varphi \in \X_R$ and $u$ is a minimizer of  $\E(\cdot, B_R)$ with $u=\varphi$ in $\C B_R$, then 
\begin{enumerate}
\item %the energy of the minimizer is finite, more precisely 
there exists $C=C_{n,s,p,R}>0$ such that
	\bgs{\label{rmk2}
		\E(u,B_R)\leq 
		 	C\left(\|\varphi\|^p_{W^{s,p}(B_{2R})} +\|\varphi\|^p_{L^{\infty}(\C B_{2R})} + \|W\|_{L^\infty(\R) }	\right),	}
	\item $u\in \B$
	 and furthermore there exists $c=c_{n,s,p,R}>0$ such that
	\eqlab{\label{bondonlp}
			 \|u\|_{L^p(B_R) } \leq c
			 (1+[u]^p_{W^{s,p}(B_R)}).
	 }
\end{enumerate}
\end{prop}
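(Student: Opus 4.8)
The plan is to obtain (1) by testing the minimality of $u$ against the cheapest admissible competitor, namely the exterior datum $\varphi$ itself, and then to deduce (2) from the resulting energy bound, using the one-sided lower bound in \eqref{integr} to pass from energy control to Gagliardo-seminorm control, and a fractional Poincar\'e-type inequality (exploiting that $u=\varphi$ on the annulus) to recover the $L^p$ estimate.

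For (1): since $\varphi\in\X_R$ one first checks that $\varphi\in\B$ — indeed $\varphi|_{B_R}\in W^{s,p}(B_R)$, $[\varphi]_{R,\varphi}\le[\varphi]_{W^{s,p}(B_{2R})}<\infty$, and trivially $\varphi=\varphi$ on $\C B_R$ — so $\varphi$ is admissible and minimality gives $\E(u,B_R)\le\E(\varphi,B_R)$. It then suffices to bound $\E(\varphi,B_R)$; this is essentially Proposition \ref{finiteenergy} applied to $\varphi$, but to obtain the precise dependence displayed in \eqref{rmk2} I would re-run its proof with $v=\varphi$: using \eqref{krusym} to write $\K_R(\varphi)=\varphi(B_R,B_R)+2\varphi(B_R,\C B_R)$, bounding $\varphi(B_R,B_R)$ and $\varphi(B_R,B_{2R}\setminus B_R)$ by $c^*[\varphi]^p_{W^{s,p}(B_{2R})}$ via the right inequality in \eqref{integr}, estimating the far interaction $\varphi(B_R,\C B_{2R})$ exactly as in \eqref{fuck2} by $C_{n,s,p,R}\big(\|\varphi\|^p_{L^p(B_{2R})}+\|\varphi\|^p_{L^\infty(\C B_{2R})}\big)$, and using $\int_{B_R}W(\varphi)\le|B_R|\,\|W\|_{L^\infty(\R)}$; summing these gives \eqref{rmk2}.

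For (2): since $u=\varphi$ on $\C B_R$ is given, proving $u\in\B$ reduces to showing $u|_{B_R}\in W^{s,p}(B_R)$ and $[u]_{R,\varphi}<\infty$. Applying the \textbf{left} inequality in \eqref{integr} on $B_R\times B_R$ yields $c_*[u]^p_{W^{s,p}(B_R)}\le u(B_R,B_R)+c_*\!\int_{B_R}\!\int_{B_R}|x-y|^{-(n+sp-p)}\,dx\,dy$, where the double integral is finite because $n+sp-p=n-(1-s)p<n$, while $u(B_R,B_R)\le\K_R(u)\le\E(u,B_R)<\infty$ by (1); hence $[u]_{W^{s,p}(B_R)}<\infty$. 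Running the same estimate on $B_R\times(B_{2R}\setminus B_R)$, where the integrand equals $F(u(x)-\varphi(y),x-y)$ and $u(B_R,B_{2R}\setminus B_R)\le\tfrac12\K_R(u)$, gives likewise $[u]_{R,\varphi}<\infty$. For the $L^p$ bound I would exploit that $u$ inherits its annulus values from $\varphi$: with $A:=B_{2R}\setminus B_R$ and $m:=\frac1{|A|}\int_A\varphi$, Jensen's inequality together with $|x-y|\le 3R$ for $x\in B_R$, $y\in A$ gives $\int_{B_R}|u(x)-m|^p\,dx\le\frac{(3R)^{n+sp}}{|A|}\,[u]^p_{R,\varphi}$, hence $\|u\|_{L^p(B_R)}\le C_{n,s,p,R}\big([u]_{R,\varphi}+\|\varphi\|_{L^p(A)}\big)$; inserting the bound on $[u]_{R,\varphi}$ (controlled by $\E(u,B_R)$, and hence by the exterior data through (1)) and on $[u]_{W^{s,p}(B_R)}$, and using $a\le 1+a^p$ to reconcile the exponents, produces \eqref{bondonlp}. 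In particular $u\in L^p(B_R)$, so $u|_{B_R}\in W^{s,p}(B_R)$ and $u\in\B$.

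The step I expect to be the real obstacle is the $L^p$ estimate in (2): a Gagliardo seminorm over $B_R$ is invariant under adding constants and therefore cannot, on its own, bound $\|u\|_{L^p(B_R)}$, so the argument is forced to use the exterior datum to anchor $u$ — this is the purpose of the Poincar\'e-type computation above — and the genuinely load-bearing point is that the one-sided lower bound in \eqref{integr} is exactly what converts the finite energy of a minimizer into the finiteness of $[u]_{R,\varphi}$, which is what makes that step usable.
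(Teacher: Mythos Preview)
Your proposal is correct and follows essentially the same route as the paper: for (1) you use $\varphi$ itself as competitor and bound $\E(\varphi,B_R)$ via the upper bound in \eqref{integr} (splitting into $B_{2R}$-- and $\C B_{2R}$--interactions exactly as the paper does), and for (2) you use the lower bound in \eqref{integr} to control $[u]_{W^{s,p}(B_R)}$ and $[u]_{R,\varphi}$ from the energy, then a Poincar\'e-type inequality exploiting $u=\varphi$ on the annulus to recover $\|u\|_{L^p(B_R)}$. The only cosmetic difference is that the paper packages your Jensen computation on $B_R\times(B_{2R}\setminus B_R)$ as a separate appendix result (Proposition~\ref{pony}) and additionally invokes the fractional Poincar\'e inequality (Proposition~\ref{poincy}) for the final bound \eqref{bondonlp}, whereas you derive the needed inequality inline; the substance is identical.
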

\begin{proof}
We can use $\varphi$ 
%=\chi_{\C B_R} u$ 
as a competitor for $u$. Using \eqref{krrr} for $\varphi$ (notice that $\varphi\in \B$), we obtain  
%We have by the right hand side of \eqref{integr} that
%	\bgs{ \label{rm45} 
%				\K_R(\varphi) 
%						\leq&\; 2c^*\bigg(\int_{B_R}\int_{B_{2R}} \frac{|\varphi(x)-\varphi(y)|^p}{|x-y|^{n+sp}}\, dx \, dy 
%				+
%				  			\int_{B_R}\int_{\C B_{2R}} \frac{|\varphi(x)-\varphi(y)|^p}{|x-y|^{n+sp}}\, dx \, dy \bigg)
%				  			\\
%				  			\leq &\; 2c^*\bigg([\varphi]^p_{W^{s,p}(B_{2R})}
%				+
%				  			\int_{B_R}\int_{\C B_{2R}} \frac{|\varphi(x)-\varphi(y)|^p}{|x-y|^{n+sp}}\, dx \, dy \bigg).
%				  			}
%Since  $|x-y|\geq |y|/2$ when $x\in B_R$ and $y\in \C B_{2R}$
%	\bgs{
%		 \int_{B_R}\int_{\C B_{2R}} &  \frac{|\varphi(x)-\varphi(y)|^p}{|x-y|^{n+sp}}\, dx \, dy 
%		 \\
%			\leq &\;  2^{p-1+n+sp} \left( \|\varphi\|^p_{L^p(B_R)}  +  \|\varphi\|^p_{L^\infty(\C B_{2R})} |B_R| \right)\int_{\C B_{2R}} |y|^{-n-sp}\, dy
%		\\
%		\leq&\;  C_{n,s,p,R} \left(  \|\varphi\|^p_{L^p(B_R)}+ \|\varphi\|^p_{L^\infty(\C B_{2R})}\right).
%	}
%	Hence	(using the same name for different constants)	
	\bgs{ \label{rm45} \K_R(\varphi) 
			\leq &\;C_{n,s,p,R}\left(\|\varphi\|^p_{W^{s,p}(B_{2R})} + \|\varphi\|^p_{L^\infty(\C B_{2R})}\right).
	}
	Given the minimality of $u$, we get that 
	\bgs{
			\E(u,B_R)\leq \E(\varphi,B_R)\leq 
			 C_{n,s,p,R}\left(\|\varphi\|^p_{W^{s,p}(B_{2R})} 
			 +\|\varphi\|^p_{L^{\infty}(\C B_{2R})} 
			 + \|W\|_{L^{\infty}(\R) }	\right) 	.
			}
			This proves point (1) of the proposition.
			By a change of variables, we obtain the bound
\eqlab{\label{buc}
		\int_{B_R} \int_{B_{2R}}  \frac{1}{|x-y|^{n+sp-p} }dx \, dy\leq |B_R| \int_{B_{3R}}  \frac{1}{|z|^{n+sp-p} }dz  =C(n,s,p,R).
		}
			According to the left hand side of \eqref{integr},  we have
	\bgs{ \label{nnnu1}
	u(B_R,B_R) \geq &\,{c_*} \left( \int_{B_R} \int_{B_R} \frac{|u(x)-u(y)|^p}{|x-y|^{n+sp} }dx \, dy - \int_{B_R} \int_{B_R}  \frac{1}{|x-y|^{n+sp-p} }dx \, dy \right) 
		 \\
		 =&\,{c_*} \left( [u]^p_{W^{s,p}(B_R)} - C_{n,s,p,R}\right)
	.}
	In the same way, we get that
	\[ u(B_R,B_{2R}\setminus B_R) \geq c_*\left( [u]^p_{R,\varphi} - C_{n,s,p,R}\right).\]
			Since $\E(u,B_R)$ is bounded, it holds that 
\eqlab{
	\label{this} [ u ]^p_{ W^{s,p}(B_R)} + [u]^p_{R,\varphi}\leq \E(u,B_R)+ C_{n,s,p,R} <C({n,s,p,R,\|\varphi\|_{L^\infty(\Rn)}, \|W\|_{L^\infty(\R)}}).
	} 
	Using Proposition \ref{pony} we have that
	\bgs{ 
	\|u\|^p_{L^p(B_R)} \leq &\; 
	%C_{n,p,s,R}\left( \int_{B_R}\int_{B_{2R}\setminus B_R} \frac{|u(x)-\varphi(y)|^p}{|x-y|^{n+sp} }\, dx \, dy + \|\varphi\|^p_{L^p(B_{2R}\setminus B_R)}\right)
%	\\
%	\leq &\;
 C_{n,p,s,R} \left( [u]_{R,\varphi}^p  + \|\varphi\|^p_{L^p(B_{2R}\setminus B_R)} \right). 
			}
			This  implies that $u\in L^p(B_R)$, hence by \eqref{this}, we get that $u\in \B$. The bound on the $L^p$ norm \eqref{bondonlp} follows from Proposition \ref{poincy}. 
\end{proof}

\begin{oss} Let us note that there are some cases in which  the request
\[ [u]_{R,\varphi}<\infty\]
can be avoided. For $sp<1$, we can take
\bgs{ \mathcal X_R= \big\{ \varphi \colon \C B_R \to \R \; \big| \;  
\varphi \in
%W^{s,p}(B_{2R}) \cap
%\mbox{ and  }    \mbox{Tail}_s(\varphi, R,\cdot)\in 
L^\infty(\C B_R)  \big\}.
  }
%  where we define
% as in \cite[Definition 4.1.2]{tesiluca} (and in the forthcoming paper \cite{cl}) the truncated tail at a point $x\in B_R$
% \[ \mbox{Tail}_s(\varphi,R,x):=\int_{B_{2R} \setminus B_R} \frac{|\varphi(y)|^p}{|x-y|^{n+sp}} \, dy.\] 
  In this case, we define 
 \bgs{
		\B:= \big\{u \colon \Rn \to \R \; \big| \; u_{\,\big|{B_R}} \in W^{s,p}(B_R)  \mbox{ and }u=\varphi \mbox{ on } \C B_R 
		\big\}.
		}
	Indeed, for $sp<1$, one can use the fractional Hardy inequality, thanks to \cite[Theorem D.1.4, Corollary D.1.5]{tesiluca} and get that
 	\bgs{
 		 \int_{B_R}\int_{B_{2R}\setminus B_R} \frac{|u(x)|^p}{|x-y|^{n+sp}}\, dx \, dy\leq &\,
 	 		\int_{B_R} \int_{\C B_{d_R(x)}(x)} \frac{|u(x)|^p}{|x-y|^{n+sp}}\, dx \, dy		
 	 		\\	
 	 		\leq &\, \int_{B_R} \frac{|u(x)|^p}{d_R(x)^{sp}}\, dx \leq  
 	 		C(n,s,p,R) \|u\|^p_{W^{s,p}(B_R)},
		}
		where $d_R(x)=dist(x,\partial B_R)$. 
		\\Just as a remark, the fractional Hardy inequality holds also for $sp>1$, see \cite[Theorem 1.1, (17)]{DydaHardy} for any $u\in C_c(B_R)$). Nevertheless, in this case one looks for minimizers in $W^{s,p}_0(B_R)$, a space which is too restrictive for our purposes. 
		
		\noindent Furthermore (check \cite[Lemma 4.5.10]{tesiluca}, or the forthcoming paper \cite{cl}) 
		\[
			 \int_{B_R}\int_{B_{2R}\setminus B_R} \frac{|\varphi(y)|^p}{|x-y|^{n+sp}}\, dx \, dy\leq \, \|\varphi\|_{L^\infty(\C B_R)} \mbox{Per}_{sp}(B_R) <\infty.
		\]
		This follows since the $sp$-perimeter  is finite for sets with Lipschitz boundary (see \cite{nms}).
		  Then
 	\bgs{ {[}u{]}^{p}_{R,\varphi}=&\;
 			\int_{B_R}\int_{B_{2R}\setminus B_R} \frac{|u(x)-\varphi(y)|^p}{|x-y|^{n+sp}} \,dx\,dy 
 			 					\leq  C(n,s,p,R)\left(|\varphi\|_{L^\infty(\C B_R)}
 					%\|\mbox{Tail}_s(\varphi,R,\cdot)\|_{L^1(B_R)} 
 					+  \|u\|^p_{W^{s,p}(B_R)}\right).		
 			}
 			We also notice that, in order to obtain the estimates  in Proposition \ref{apriori}, one can consider
 			\sys[\tilde \varphi=]{&\varphi, \quad \mbox{ in } \C B_R\\
 						&0, \quad \mbox{ in  }B_R
 						.}
% 						All the needed estimates follow for $\tilde \varphi$. 
 			\end{oss}

%\noindent For the sake of the next theorem, we suppose that $F$ is lower semicontinuous, i.e.
%			\eqlab{
%				\label{lsc}  
%				&\mbox{for every sequence of functions } 
%					t_k:\Rn \to \R \mbox{ such that } 
%					 \\
%			 &
%			 t_k \rightarrow t \, \mbox{ a.e. in } \Rn
%			 \mbox{ where } t:\Rn \to \R  
%			  , \mbox{ then }
%			\\
%			 & F(t,x) \leq \liminf_{k\to \infty} F(t_k,x), \mbox{ for a.e. } x\in \Rn.
%				}
%				This is a weaker assumption than taking \eqref{C2t}, and will be used only to prove the next theorem.
				
 We prove now the existence of minimizers of the energy.
\begin{thm} [Existence] \label{existence} Let $F$ be lower semi-continuous in the first variable, and such that it satisfies  \eqref{sym},  \eqref{integr}, and let $W$ be such that it satisfies  \eqref{wloc}. If $\varphi \in \X_R$,  %L^\infty(\Rn\setminus B_R) $  be given, with $\|\varphi\|_{L^{\infty}(\C B_R)} \leq 1$ and 
 %\cap W^{s,p}(B_{2R})
  %such that 
 %\[\int_{B_R} \int_{B_{2R}\setminus B_R} \frac{|u(x)-\varphi(y)|^p}{|x-y|^{n+sp} }\, dx \, dy <\infty .\]
 % $\mbox{Tail}_s(\varphi, R,\cdot)\in L^1(B_R)$. 
there exists a minimizer $u\in \B$ of $\E(\cdot, B_R)$.
\end{thm}
\begin{proof}
%In the next lines, the constant may change value from line to line. 
	 Since $F, W \geq 0$, we have that $\E(v,B_R)\geq 0$ for any $v\in \B$. Then there exists $\{u_k\}\in \B$ a minimizing sequence, i.e.
	\[
		\liminf_{k\to\infty} \E(u_k,B_R) = \inf \big\{ \E(v,B_R) \;\big| \; v\in \B\big\}.
		\]
	There is $\bar k >0$ such that for all $k\geq \bar k$ there exists $M>0$ such that
	\[
		\E(u_k,B_R) \leq M,
		\]
	so in particular by \eqref{this} we have that 
	\[
		[u_k]_{W^{s,p}(B_R)} \leq C_1, \qquad [u_k]_{R,\varphi}\leq C_2,
		\]
		with $C_1,C_2>0$ depending on $n,s,p,R,M$.
	Also, by  \eqref{bondonlp}, we have that
	\[
		\|u_k\|_{L^p(B_R)}<C(n,s,p,R) \left( [u]_{W^{s,p}(B_R)}+1\right),
		\] 
		therefore %for $k$ large enough, 
		for all $k\geq \bar k$ 
		there is $\tilde M>0$ such that
	\[
		\| u_k\|_{W^{s,p}(B_R)}<\tilde M.
		\]
	By compactness (see e.g. Theorem 7.1 in \cite{hitch}), there exists a subsequence, which we still call $\{u_k\}$, such that 
	\bgs{
		\|u_k-u\|_{L^p(B_R)}\longrightarrow 0, \quad \mbox{and }
		 \quad u_k \longrightarrow u \; \; \mbox{ a.e. in } \Rn
	}
		for some $u\in W^{s,p}(B_R)$. Also, $u\in \B$, by Fatou and the uniform bound on $[u_k]_{R,\varphi}$.  
		Using Fatou's Theorem, the lower semi-continuity of $F$ in the first variable and  \eqref{wloc} we have that
	\bgs{
		\inf \big\{ &\E(v,B_R) \,\big| \, v\in \B\big\} 
		\\
		=&\;\liminf_{k\to \infty} \bigg(  \iint_{\R^{2n}\setminus(\C B_R)^2} F(u_k(x)-u_k(y),x-y)\, dx \, dy  
		+ \int_{B_R} W(u_k)\, dx\bigg)
		\\
		\geq& \; \iint_{\R^{2n}\setminus(\C B_R)^2}\liminf_{k\to \infty}F(u_k(x)-u_k(y),x-y)\, dx \, dy 
		 + \int_{B_R} W(u)\, dx
		\\
		\geq  & \; \iint_{\R^{2n}\setminus (\C B_R)^2}F(u(x)-u(y),x-y)\, dx \, dy + \int_{B_R} W(u)\, dx
		\\
		=& \;\E(u,B_R).
		}
				Hence $u$ is a minimizer and this concludes the proof of the theorem.				
\end{proof}
We make now an observation on the Euler-Lagrange equation related to the energy $\E$. 
%Here, $F$ satisfies  \eqref{sym}, \eqref{integr}, \eqref{C2t}, \eqref{1derivt}
% and \eqref{uconvex} 

\begin{prop}\label{eulerlagr} 
Let $F$ satisfy \eqref{sym}, \eqref{integr}, \eqref{C2t}, \eqref{1derivt} and $W$ satisfy \eqref{wloc}. 
If $\varphi \in \X_R $ and $u\in \B$, then
	\bgs{\label{eq0}
		\frac{d}{d\eps} &\E(u+\eps\phi,B_R) \Big|_{\eps=0}\\
		=&\; \iint_{\R^{2n}\setminus (\C B_R)^2} \partial_t F(u(x)-u(y),x-y) (\phi(x)-\phi(y)) \, dx  \, dy + \int_{B_R} W'(u(x))\phi(x)\,dx
		}
		for any $\phi \in C^\infty_c(B_R)$.
\end{prop}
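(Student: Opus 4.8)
The plan is to compute the directional (Gâteaux) derivative of the energy $\E(u+\eps\phi, B_R)$ at $\eps = 0$ directly from the definition, by differentiating under the integral sign. Write $\E(u+\eps\phi,B_R) = \K_R(u+\eps\phi) + \int_{B_R} W(u+\eps\phi)\,dx$. For the potential term, since $\phi \in C^\infty_c(B_R)$ is bounded, $u$ is bounded on the relevant set, and $W \in C^1(\R)$ with $W' \in L^\infty$ on the range of $u + \eps\phi$ for $\eps$ small, the derivative $\frac{d}{d\eps}\int_{B_R} W(u+\eps\phi)\,dx = \int_{B_R} W'(u(x))\phi(x)\,dx$ follows from the dominated convergence theorem applied to the difference quotients (the quotients are bounded by $\|W'\|_{L^\infty}\|\phi\|_{L^\infty}\mathbf 1_{B_R}$, which is integrable).

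For the nonlocal term $\K_R$, set $g_\eps(x,y) = F\big((u(x)-u(y)) + \eps(\phi(x)-\phi(y)),\, x-y\big)$ on $\R^{2n}\setminus(\C B_R)^2$. By \eqref{C2t}, for a.e.\ $x-y$ the map $\eps \mapsto g_\eps(x,y)$ is $C^1$ with $\partial_\eps g_\eps(x,y) = \partial_t F\big((u(x)-u(y))+\eps(\phi(x)-\phi(y)),x-y\big)\,(\phi(x)-\phi(y))$. The key step is to produce an $L^1$ dominating function for the difference quotients $\frac{g_\eps - g_0}{\eps}$, uniformly for $|\eps|\le 1$, so that one may differentiate under the double integral and then set $\eps=0$. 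By the mean value theorem and \eqref{1derivt}, the difference quotient is bounded in absolute value by
\[
c_3\,\frac{\big(|u(x)-u(y)| + |\phi(x)-\phi(y)|\big)^{p-1}}{|x-y|^{n+sp}}\,\big|\phi(x)-\phi(y)\big|.
\]
Since $\phi$ is supported in $B_R$, the integrand vanishes unless $x\in B_R$ or $y\in B_R$; combined with $\phi \in L^\infty$ and Lipschitz, one splits the region into $B_R\times B_R$, $B_R\times(B_{2R}\setminus B_R)$ (and its symmetric counterpart), and $B_R\times \C B_{2R}$. On the first two pieces, $|\phi(x)-\phi(y)|^p/|x-y|^{n+sp}$ is controlled using $[\phi]^p_{W^{s,p}}<\infty$ and $|u(x)-u(y)|^{p-1}|\phi(x)-\phi(y)| \lesssim |u(x)-u(y)|^p + |\phi(x)-\phi(y)|^p$; the $u$-part is finite because $u\in\B$ (hence $[u]^p_{W^{s,p}(B_R)}, [u]^p_{R,\varphi}<\infty$, cf.\ Proposition~\ref{finiteenergy}). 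On $B_R\times\C B_{2R}$ one uses $|x-y|\ge |y|/2$ exactly as in \eqref{fuck2} together with the boundedness of $u$ on $B_R$ and of $\varphi = u$ on $\C B_{2R}$. This yields an integrable majorant.

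With the dominating function in hand, the dominated convergence theorem gives
\[
\frac{d}{d\eps}\K_R(u+\eps\phi)\Big|_{\eps=0} = \iint_{\R^{2n}\setminus(\C B_R)^2} \partial_t F(u(x)-u(y),x-y)\,(\phi(x)-\phi(y))\,dx\,dy,
\]
and the finiteness of this integral is part of what the domination argument establishes (it is the $\eps=0$ case of the majorant, noting $|\partial_t F(t,x)(\phi(x)-\phi(y))| \le c_3\frac{|u(x)-u(y)|^{p-1}|\phi(x)-\phi(y)|}{|x-y|^{n+sp}}$ after using \eqref{1derivt} again). Adding the two contributions gives \eqref{eq0}. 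The main obstacle is purely the bookkeeping for the $L^1$ majorant: one must handle the factor $|u(x)-u(y)|^{p-1}$ (which is not obviously controlled pointwise) via Young's inequality to trade it for $|u(x)-u(y)|^p$, and carefully treat the far region $\C B_{2R}$ where $u$ is merely bounded; none of these steps is deep, but assumption \eqref{1derivt} is doing the essential work in making the difference quotients dominated.
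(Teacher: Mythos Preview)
Your proposal is correct and follows essentially the same route as the paper: apply the mean value theorem in $t$, invoke \eqref{1derivt} to bound the difference quotient by $\frac{(|u(x)-u(y)|+|\phi(x)-\phi(y)|)^{p-1}}{|x-y|^{n+sp}}|\phi(x)-\phi(y)|$, produce an $L^1(\R^{2n}\setminus(\C B_R)^2)$ majorant using $u\in\B$ and the regularity of $\phi$, and conclude by dominated convergence. The only cosmetic difference is that the paper controls the cross term $|u(x)-u(y)|^{p-1}|\phi(x)-\phi(y)|$ via H\"older's inequality (pairing it against $[u]^p$ and $[\phi]^p$), whereas you use Young's inequality to the same effect; also note that on $B_R\times\C B_{2R}$ you need only $\|u\|_{L^p(B_R)}<\infty$ (as in \eqref{fuck2}), not boundedness of $u$ on $B_R$.
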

\begin{proof} We give a sketch of the proof.
First of all, notice that if $\phi\in C_c^\infty(B_R)$, for any $\eps>0$  we have that $u+\eps \phi \in \B$.  
%using Lagrange's mean value theorem we have that
%	\eqlab{\label{fuck3} 
%		{[}\phi{]}_{W^{s,p}(B_R)} 
%		= &\; 
%		 \int_{B_R}\int_{B_R} \frac{|\phi(x)-\phi(y)|^p}{|x-y|^{n+sp}}\, dx\, dy
%		\\
%		\leq &\;  \|\phi\|^p_{C^1(B_R)} \int_{B_R}\int_{B_R} |x-y|^{p-n-sp} \, dx\, dy
%%		\\
%%		\leq&\;  C_{n,R} \|\phi\|_{C^1(B_R)} \int_{B_{2R}} |z|^{p-n-sp}\, dz
%		\leq  C_{n,s,p,R}  \|\phi\|^p_{C^1(B_R)}
%			} by \eqref{buc}, hence $u+\eps \phi \in W^{s,p}(B_R)$. Also, 			\eqlab{\label{fuck4}
%			\int_{B_R} \int_{B_{2R}\setminus B_R}  \frac{|\phi(x)|^p}{|x-y|^{n+sp}}\, dx \, dy \leq  C_{n,s,p,R}    \|\phi\|^p_{C^1(B_R)}
%			} 
%			 where we use the mean value theorem for $\phi(x)=\phi(x)-\phi(y)$. 
%		Using \eqref{fuck4} we get that
%	\bgs{
%		 {[}u+\eps\phi{]}_{R,\varphi} =&\; \int_{B_R}\int_{B_{2R}\setminus B_R} \frac{|u(x)+\eps \phi(x) -\varphi(y)|^p}{|x-y|^{n+sp}} \, dx \, dy
%		 \\
%			\leq &\; 2^{p-1}\left( [u]^p_{R,\varphi} +  C_{n,s,p,R}  \eps^p     \|\phi\|^p_{C^1(B_R)}\right). 
%		 }
%		Recalling also that $u+\eps\phi =\varphi$ on $\C B_R$, it yields $u+\eps\phi \in \B$ for any $\eps>0$. 
By Proposition \ref{finiteenergy} it follows that both $\E(u,B_R)$ and $\E(u+\eps \phi,B_R)$ are finite. \\
Since $F(\cdot, x)\in C^1(\R)$ %(given \eqref{C2t}) 
by the mean value theorem there is $\tau_\eps(x,y)$ satisfying $|\tau_\eps(x,y)|\leq \eps$ such that
	\bgs{
		 &\;\frac{ F\left(u(x)-u(y)+\eps(\phi(x)-\phi(y)),x-y\right)-F\left(u(x)-u(y),x-y\right)}\eps
		 \\
		 =&\; \partial_t F\left(u(x)-u(y)+ \tau_\eps (\phi(x)-\phi(y)),x-y\right) \left(\phi(x)-\phi(y)\right).
		 }
%Notice that 
The assumption \eqref{1derivt}
%we have that
%	\bgs{
%	&\,	\left|\partial_t F\left(u(x)-u(y)+ \tau_\eps (\phi(x)-\phi(y),x-y)\right) \left(\phi(x)-\phi(y)\right)\right| 
%		\\
%		\leq &\;c_3\frac{ |u(x)-u(y)+ \tau_\eps (\phi(x)-\phi(y))|^{p-1}}{|x-y|^{n+sp}} |\phi(x)-\phi(y)|
%		\\
%		\leq &\;C \left(\frac{|u(x)-u(y)|^{p-1}|\phi(x)-\phi(y)|}{|x-y|^{n+sp}} + \eps \frac{|\phi(x)-\phi(y)|^p}{|x-y|^{n+sp}}\right) \leq C\left(\tilde F_1(x,y) + \tilde F_2(x,y)\right).
%		}
		and the H{\"o}lder inequality 
		 %and computations as in \eqref{fuck1}, \eqref{fuck2} and using \eqref{fuck3}, \eqref{fuck4}
%		 \bgs{ 
%		&\iint_{\R^{2n}\setminus (\C B_R)^2}   \tilde F_1(x,y) \,dx \, dy
%%		 = \iint_{\R^{2n}\setminus (\C B_R)^2}  \frac{|u(x)-u(y)|^{p-1}|\phi(x)-\phi(y)|}{|x-y|^{n+sp} }\, dx \, dy 
%		\\
%		\leq &\; 
%			 \left( \iint_{\R^{2n}\setminus (\C B_R)^2}  \frac{|u(x)-u(y)|^p}{|x-y|^{n+sp}}\, dx \, dy \right)^{\frac{p-1}{p}}\left(  \iint_{\R^{2n}\setminus (\C B_R)^2} \frac{ |\phi(x)-\phi(y)|^p}{|x-y|^{n+sp}}\, dx \, dy \right)^{\frac{1}p} 
%			 \\
%			 \leq &\; C_{n,s,p,R} \Big(\|u\|^{p}_{W^{s,p}(B_R)}+[u]^{p}_{R,\varphi} + \|\varphi\|^p_{L^\infty(\Rn)}  \Big)^{\frac{p-1}{p}}
%			  \|\phi\|_{C^1(\Rn)}  .
%			 }
%It follows that $\tilde F_1, \tilde F_2 \in L^1(\R^{2n}\setminus (\C B_{R})^2)$,  
lead to
	\bgs{
		\big|  \partial_t F\left(u(x)-u(y)+ \tau_\eps (\phi(x)-\phi(y)),x-y\right) \left(\phi(x)-\phi(y)\right)\big| \leq F(x,y),
	}
for some $F \in L^1(\R^{2n}\setminus (\C B_{R})^2)$. It is enough to use the Dominated Convergence Theorem to conclude the proposition.
%		\bgs{
%		&\frac{d}{d\eps}\K_R(u+\eps\phi)\Big|_{\eps=0} 
%				= \lim_{\eps\to 0} \frac{\K_R(u+\eps\phi)-\K_R (u)}{\eps}
%		\\
%		=&\;\lim_{\eps \to 0}  \iint_{\R^{2n}\setminus (\C B_R)^2}\frac{ F\left(u(x)-u(y)+\eps(\phi(x)-\phi(y)),x-y\right)-F\left(u(x)-u(y),x-y\right)}\eps \, dx \, dy
%		\\
%		=&\;\lim_{\eps \to 0}  \iint_{\R^{2n}\setminus (\C B_R)^2}
%	\partial_t F\left(u(x)-u(y)+ \tau_\eps (\phi(x)-\phi(y)),x-y\right) \left(\phi(x)-\phi(y)\right)\, dx \, dy
%	\\
%	=&\;  \iint_{\R^{2n}\setminus (\C B_R)^2} \partial_t F\left(u(x)-u(y), x-y\right) \left(\phi(x)-\phi(y)\right) \, dx \, dy,
%	}
%given \eqref{C2t}. By using \eqref{wloc} and following the same steps, we can conclude the proof of the proposition.
\end{proof}

Furthermore, we prove 
some form of a strong comparison principle for minimizers. 

\begin{thm}\label{maxp} Let $F$ satisfy  \eqref{sym}, \eqref{integr}, \eqref{C2t}, \eqref{1derivt} and \eqref{uconvex} and let $W$ satisfy \eqref{wloc}. If $\varphi_1, \varphi_2 \in \X_R$  and $u_1 \in \mathcal W^{s,p}_{R,\varphi_1}$,  $u_2 \in \mathcal W^{s,p}_{R,\varphi_2}$ are two minimizers of $\E(\cdot, B_R)$, such that  
	\bgs{ & u_1,u_2 \in L^\infty(B_R), 
	\\ 
	&u_1\geq u_2\quad \mbox{ in } \R^n  
	\\
	& u_1=u_2 \quad \mbox{ in } B_\delta(\overline x) \subset \subset B_R
	}
	  for some  $\delta>0, \, \overline x\in B_R$, then $u_1=u_2$ almost everywhere in $\Rn$.
\end{thm}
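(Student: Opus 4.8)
The plan is to test the Euler--Lagrange equations for $u_1$ and $u_2$ against nonnegative bumps supported inside the ball $B_\delta(\overline x)$ on which the two functions already agree, and then to use the \emph{strict} monotonicity \eqref{uconvex} of $\partial_t F$ to propagate the coincidence to all of $\Rn$.

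First, since $u_i+\eps\phi$ is an admissible competitor in $\mathcal W^{s,p}_{R,\varphi_i}$ for every $\phi\in C_c^\infty(B_R)$, and $u_i$ minimizes $\E(\cdot,B_R)$, the map $\eps\mapsto\E(u_i+\eps\phi,B_R)$ has a minimum at $\eps=0$; hence by Proposition \ref{eulerlagr} its derivative there vanishes, i.e. for $i=1,2$
\[\iint_{\R^{2n}\setminus(\C B_R)^2}\partial_t F\big(u_i(x)-u_i(y),x-y\big)\big(\phi(x)-\phi(y)\big)\,dx\,dy+\int_{B_R}W'(u_i)\,\phi\,dx=0.\]
Now fix $\phi\in C_c^\infty(B_\delta(\overline x))$ with $\phi\geq 0$ and $\phi\not\equiv 0$. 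Since $u_1=u_2$ a.e. on $B_\delta(\overline x)\supset\operatorname{supp}\phi$, the two potential integrals coincide, and subtracting the two identities leaves
\[I:=\iint_{\R^{2n}\setminus(\C B_R)^2}\Big[\partial_t F\big(u_1(x)-u_1(y),x-y\big)-\partial_t F\big(u_2(x)-u_2(y),x-y\big)\Big]\big(\phi(x)-\phi(y)\big)\,dx\,dy=0.\]

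The next step is a symmetrization of $I$. Differentiating $F(t,z)=F(-t,-z)$ from \eqref{sym} in $t$ gives $\partial_t F(t,z)=-\partial_t F(-t,-z)$; relabelling $x\leftrightarrow y$ in the part of $I$ carrying $\phi(y)$ (the integration domain being symmetric) turns it into minus the part carrying $\phi(x)$, so that $I=2J$ with
\[J=\int_{B_\delta(\overline x)}\bigg(\int_{\Rn}\Big[\partial_t F\big(u_1(x)-u_1(y),x-y\big)-\partial_t F\big(u_2(x)-u_2(y),x-y\big)\Big]\,dy\bigg)\phi(x)\,dx,\]
where reducing the $x$--domain to $B_\delta(\overline x)$ and letting $y$ range over all of $\Rn$ is legitimate because $\operatorname{supp}\phi\subset\subset B_R$. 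For $x\in\operatorname{supp}\phi$ we have $u_1(x)=u_2(x)=:a$; if $y\in B_\delta(\overline x)$ the bracket vanishes, while if $y\notin B_\delta(\overline x)$ then $a-u_1(y)\leq a-u_2(y)$ because $u_1\geq u_2$ in $\Rn$, so \eqref{uconvex} forces the bracket to be $\leq 0$, strictly negative unless $u_1(y)=u_2(y)$. Thus the integrand of $J$ is pointwise $\leq 0$ while $J=0$, so it vanishes a.e.; by Fubini, for a.e. $y\in\Rn$ the bracket is $0$ for a.e. $x\in\{\phi>0\}$, and picking one such $x$ the strictness in \eqref{uconvex} yields $u_1(y)=u_2(y)$. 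Hence $u_1=u_2$ a.e. in $\Rn$.

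The argument is short, and I expect the only delicate points to be the verification that $u_i+\eps\phi$ stays admissible so that Proposition \ref{eulerlagr} may be invoked with vanishing left-hand side (immediate from minimality and $\operatorname{supp}\phi\subset\subset B_R$) and the sign bookkeeping in the symmetrization. The real mechanism is just that a nonpositive integrand with zero integral vanishes a.e., combined with the \emph{strict} monotonicity of $t\mapsto\partial_t F(t,x)$, which is precisely what upgrades ``$u_1\geq u_2$ and the bracket vanishes'' to ``$u_1=u_2$''.
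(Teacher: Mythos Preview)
Your argument is correct and is essentially the same as the paper's: both subtract the Euler--Lagrange identities for $u_1$ and $u_2$, test with a nonnegative $\phi$ supported where $u_1=u_2$, symmetrize via $\partial_t F(t,z)=-\partial_t F(-t,-z)$, and use the strict monotonicity \eqref{uconvex} to force the coincidence set to be all of $\Rn$. The only cosmetic difference is that the paper takes $\phi\in C_c^\infty(B_{\delta/2}(\overline x))$ (rather than $B_\delta(\overline x)$) to make the finiteness of the inner integral transparent via \eqref{1derivt}, and phrases the conclusion as $|A_\delta|=0$ for $A_\delta=\{u_1>u_2\}\setminus B_\delta(\overline x)$; your Fubini argument reaches the same conclusion.
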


\begin{proof}
%			 	  For $u_1\in \mathcal{W}^{s,p}_{R,\varphi_1}$ and $u_2\in \mathcal{W}^{s,p}_{R,\varphi_2}$ minimizers of the energy in $B_R$, we have 
			 	  According to Proposition \ref{eulerlagr} we have that 
		\bgs{
		\iint_{\R^{2n}\setminus (\C B_R)^2} &\Big( \partial_t F(u_2(x)-u_2(y),x-y) - \partial_t F(u_1(x)-u_1(y),x-y)  \Big)(\phi(x)-\phi(y)) \, dx \, dy 
		\\
		&+\int_{B_R} \left(W'(u_2(x))-W'(u_1(x))\right) \phi(x) \, dx=0
			}
			for any $\phi \in C^\infty_c(B_R)$.
In particular this equality holds for any
		\[ \phi\in  C^\infty_c(B_\frac{\delta}2(\bar x)), \qquad \phi \geq 0. %\qquad \int_{B_\delta(\bar x)}\varphi(x) \, dx=1.
		\] 
			Since $\phi(x)=0$ on $\C B_\frac{\delta}2(\bar x)$ and $u_1(x)=u_2(x)$ in $B_\delta(\bar x)$, contributions come only from interactions  between $B_\frac{\delta}2(\bar x)$ and $\C B_\delta(\bar x)$. So, using also \eqref{sym}, we are left with
			\bgs{\label{brru}
			& 0=
			 \int_{B_\frac{\delta}2(\bar x) } \left( \int_{ \C B_\delta(\bar x) } \Big( \partial_t F(u_1(x)-u_2(y),x-y) - \partial_t F(u_1(x)-u_1(y),x-y) \Big)\, dy \right) \phi(x) \, dx 
			\\
			& + 
		\int_{ \C B_\delta(\bar x) }	\left( \int_{B_\frac{\delta}2(\bar x) }   \Big( \partial_t F(u_2(x)-u_1(y),x-y)- \partial_t F(u_1(x)-u_1(y),x-y) \Big) (-\phi(y)) \, dy \right)\, dx 
		\\
		 &\;=\, 2 	\int_{B_\frac{\delta}2(\bar x) } \left( \int_{ \C B_\delta(\bar x) } \Big( \partial_t F(u_1(x)-u_2(y),x-y) - \partial_t F(u_1(x)-u_1(y),x-y) \Big)\, dy \right) \phi(x) \, dx .
			}
 Let 
			%$\sigma >0$ be an arbitrary  fixed quantity, and let
		\[ 
		A_{\delta}:=\big\{y\in \C B_{\delta} (\bar x)\; \big| \; u_1(y)> u_2(y)\big\}
		\] 
		and we argue by contradiction, supposing that 
		\eqlab{ \label{ad} |A_\delta|\neq 0.}
		 When $y \in \C A_\delta$, by hypothesis $u_1(y)=u_2(y)$, hence
		 \bgs{\label{brru} 
		 0=\int_{B_\frac{\delta}2(\bar x) } \left( \int_{ A_\delta } \Big( \partial_t F(u_1(x)-u_2(y),x-y) - \partial_t F(u_1(x)-u_1(y),x-y) \Big)\, dy \right) \phi(x) \, dx.
		}
		  Denoting for $x \in B_\frac{\delta}2(\bar x),y \in A_\delta$,
%		Notice that 
%		\[ A_\delta \subset B_{\tilde R},\]
%		since $u_1=u_2$ on $\C B_{\tilde R}$.
\[ h(x,y):= \partial_t F(u_1(x)-u_2(y),x-y) - \partial_t F(u_1(x)-u_1(y),x-y) ,\]
by \eqref{uconvex} 
we have that on $A_\delta$
%			\[
%			 u_1(x)-u_2(y) > u_1(x)-u_1(y) 
%			 \]
%			 implies  that
			 \eqlab{\label{brruf}
				h(x,y) >0.
				}
Defining
		$g:B_\frac{\delta}2(\bar x)\to \R_+$  as 
		\[ g(x):= \int_{A_\delta} h(x,y)\, dy\]
		we get that for any $\phi\in C^\infty_c(B_\frac{\delta}2(\bar x),[0,+\infty))$ 
		\[
		0= \int_{B_\frac{\delta} 2(\bar x)} g(x) \phi(x) \, dx.
		\]
		It follows that
			\[ g(x)=0 \quad \mbox{ for almost any } x\in B_\frac{\delta}2 (\bar x),\] which by \eqref{ad} and \eqref{brruf} gives a contradiction.  
			%		By \eqref{uconvex} 
%and the fact that on $A_\delta$
%			\[
%			 u_1(x)-u_2(y) > u_1(x)-u_1(y) 
%			 \]
%			 implies  that
%			 \eqlab{\label{brruf}
%				 \partial_t F(u_1(x)-u_2(y),x-y) - \partial_t F(u_1(x)-u_1(y),x-y) >0,
%				}
%			we obtain $g(x) \geq 0$. 		
It follows that that $|A_\delta|=0$, hence $u_1 =u_2$ almost anywhere in $\C B_{\delta}(\bar x) $.\\
%, and since $\sigma$ is  arbitrarily chosen,  in $\C B_{\delta}(\bar x) $, thus in $\Rn$. 
We conclude by noticing that, by \eqref{1derivt}, $g$ is well defined. Indeed
		\bgs{
		&\; \int_{A_\delta} \left| \partial_t F(u_1(x)-u_2(y),x-y)\right| \, dy \leq c_3 \int_{A_\delta} \frac{|u_1(x)-u_2(y)|^{p-1} }{|x-y|^{n+sp}}\, dy 
		\\
	\leq &\; 2^{p-2}c_3
	\bigg( \|u_1\|^{p-1}_{L^\infty(B_R)} \int_{\C B_\delta(\bar x)} |x-y|^{-n-sp}\, dy
	+\|u_2\|^{p-1}_{L^\infty(B_R)} \int_{B_R \setminus B_\delta(\bar x)}|x-y|^{-n-sp}\, dy 
	  \\
	 &\;  +  \|\varphi_2\|^{p-1}_{L^\infty(\C B_R)} \int_{\C B_R} |x-y|^{-n-sp}\, dy\bigg).
		} 
	We have that $|y-x|\geq |y-\bar x|-|x-\bar x|\geq |y-\bar x|/2$, hence
	\[ 
		\int_{A_\delta} \left| \partial_t F(u_1(x)-u_2(y),x-y)\right| \, dy\leq C_{n,s,p,\delta}  \left( \|u_1\|^{p-1}_{L^\infty(B_R)}+\|u_2\|^{p-1}_{L^\infty(B_R)}+\|\varphi_2\|^{p-1}_{L^\infty(\C B_R)} \right),
	\] 
	and this concludes the proof.	
\end{proof}

%%%%%%%%%%%%%%%%SECTION%%%%%%

\section{Preliminary energy estimates}\label{prelim}
The preliminary results in this Section hold in any dimension, however the main result works with our techniques only in dimension two. In fact, this depends on a Taylor expansion of order two, that we do in the next Lemma.

\begin{lem}\label{first} Let $F$ satisfy \eqref{sym}
%, \eqref{mon},  \eqref{mon2}, \eqref{homo}, \eqref{integr}, \eqref{C2}, \eqref{partial1},
to  \eqref{partial2}, $W$ satisfy \eqref{wloc} and let
 $ \varphi \in C_c^{\infty}(B_1)$.  Also, for any $R>1$ and $y\in \Rn$, let
\[ \Psi_{R,\pm}(y):= y\pm \varphi\left(\frac{y}R\right) e_1 \quad \mbox{ and } \quad 
 u_{R,\pm}(x)= u(\Psi^{-1}_{R,\pm}(x)).\] Then for large $R$ the maps $ \Psi_{R,\pm}$ are diffeomorphisms on $\Rn$ and
\[ \E(u_{R,+},B_R) +\E(u_{R,-},B_R) -2\E(u,B_R) \leq \frac{C}{R^2} \E(u,B_R) .\]
\end{lem}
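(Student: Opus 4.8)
The idea is to estimate the energy of the perturbed competitors $u_{R,\pm}$ directly against that of $u$, splitting $\E = \K_R + \int W$ and handling the two pieces separately. Since $\Psi_{R,\pm}$ only moves points in the $e_1$-direction by $\pm\varphi(y/R)$ and $\varphi$ is supported in $B_1$, for $R$ large the Jacobian $D\Psi_{R,\pm}(y) = I \pm R^{-1}(\nabla\varphi)(y/R)\otimes e_1$ is within $O(R^{-1})$ of the identity; in particular $\Psi_{R,\pm}$ is a diffeomorphism of $\Rn$ equal to the identity outside $B_R$, and $\det D\Psi_{R,\pm}(y) = 1 \pm R^{-1}\partial_1\varphi(y/R)$. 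This immediately gives the potential term: by the change of variables $x = \Psi_{R,\pm}(y)$,
\[
\int_{B_R} W(u_{R,\pm}(x))\,dx = \int_{B_R} W(u(y))\,\bigl|\det D\Psi_{R,\pm}(y)\bigr|\,dy,
\]
so $\int_{B_R} W(u_{R,+}) + \int_{B_R} W(u_{R,-}) - 2\int_{B_R} W(u)\,dy = \int_{B_R} W(u(y))\bigl(R^{-1}\partial_1\varphi(y/R) - R^{-1}\partial_1\varphi(y/R)\bigr)dy = 0$, where the cancellation is exact because the $\pm$ first-order terms are opposite; in fact even the sum of absolute deviations is $O(R^{-1})\|W\|_{L^\infty}|B_1| \le C R^{-2}\E(u,B_R)$ once we invoke $\E(u,B_R)\ge c R^n \ge c R^2$ is \emph{not} automatic, so I would instead keep the exact cancellation of the linear term and Taylor-expand to second order, getting a bound $C R^{-2}|B_1|\|W\|_{L^\infty(\R)}$, which is absorbed into $C R^{-2}\E(u,B_R)$ provided $\E(u,B_R)$ is bounded below — but since we only need an upper bound of the stated form and $\E(u,B_R)\geq 0$, the cleaner route is: the potential contributes $0$ to the second difference up to $O(R^{-2})$ and this $O(R^{-2})$ is itself bounded by $CR^{-2}\int_{B_R}W(u)\le CR^{-2}\E(u,B_R)$ after comparing $\int_{B_1}|W(u)|$ with $\int_{B_R}W(u)$ — acceptable since the support is in $B_1\subset B_R$.

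For the kinetic term, apply the change of variables $x=\Psi_{R,\pm}(\xi)$, $y=\Psi_{R,\pm}(\eta)$ in the double integral defining $\K_R(u_{R,\pm})$; since $\Psi_{R,\pm}$ is the identity off $B_R$, the domain $\R^{2n}\setminus(\C B_R)^2$ is preserved, and
\[
\K_R(u_{R,\pm}) = \iint_{\R^{2n}\setminus(\C B_R)^2} F\bigl(u(\xi)-u(\eta),\ \Psi_{R,\pm}(\xi)-\Psi_{R,\pm}(\eta)\bigr)\, J_\pm(\xi)J_\pm(\eta)\, d\xi\, d\eta,
\]
with $J_\pm(\xi) = |\det D\Psi_{R,\pm}(\xi)| = 1 \pm R^{-1}\partial_1\varphi(\xi/R)$. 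Thus the second difference $\K_R(u_{R,+})+\K_R(u_{R,-})-2\K_R(u)$ becomes an integral of the quantity
\[
\Phi_+(\xi,\eta)+\Phi_-(\xi,\eta)-2F\bigl(u(\xi)-u(\eta),\xi-\eta\bigr),
\qquad \Phi_\pm := F\bigl(u(\xi)-u(\eta),\, (\xi-\eta)\pm R^{-1}(\varphi(\xi/R)-\varphi(\eta/R))e_1\bigr) J_\pm(\xi)J_\pm(\eta).
\]
Now fix the $t$-slot at $t=u(\xi)-u(\eta)$ and Taylor-expand $g(v):=F(t,v)$ to second order around $v=\xi-\eta$, with increment $h_\pm = \pm R^{-1}(\varphi(\xi/R)-\varphi(\eta/R))e_1$, and expand $J_\pm(\xi)J_\pm(\eta) = 1 \pm R^{-1}(\partial_1\varphi(\xi/R)+\partial_1\varphi(\eta/R)) + R^{-2}\partial_1\varphi(\xi/R)\partial_1\varphi(\eta/R)$. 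In the sum over $\pm$, all first-order (odd in the $\pm$) terms cancel exactly — both the $\nabla_v F\cdot h_\pm$ term and the cross term $\nabla_v F\cdot h_\pm \cdot (\pm R^{-1}(\cdots))$ survives but is $O(R^{-2})$ — leaving only second-order contributions: the Hessian term $\tfrac12 h_\pm^T D^2_v F\, h_\pm$, the product $(\nabla_v F\cdot h_\pm)\cdot(\pm R^{-1}\sum\partial_1\varphi)$, and the pure Jacobian $R^{-2}$ term times $F$. Each such term is pointwise bounded, using $|h_\pm|\le R^{-1}\|\nabla\varphi\|_\infty R^{-1}|\xi-\eta|$ when $|\xi-\eta|$ is small (Lipschitz bound on $y\mapsto\varphi(y/R)$ with constant $R^{-1}\|\nabla\varphi\|_\infty$... wait, $|\varphi(\xi/R)-\varphi(\eta/R)|\le R^{-1}\|\nabla\varphi\|_\infty|\xi-\eta|$) and $|h_\pm|\le 2R^{-1}\|\varphi\|_\infty$ always, together with the derivative bounds \eqref{partial1}, \eqref{partial2}: $|\partial_{x_i}F|\le c_1 F/|x|$, $|\partial^2_{x_i}F|\le c_2 F/|x|^2$. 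This yields the key pointwise estimate
\[
\bigl|\Phi_+(\xi,\eta)+\Phi_-(\xi,\eta)-2F(u(\xi)-u(\eta),\xi-\eta)\bigr| \le \frac{C}{R^2}\, F\bigl(u(\xi)-u(\eta),\xi-\eta\bigr)\,\chi_{\{\xi\in B_R\ \mathrm{or}\ \eta\in B_R\}},
\]
the cutoff because $\Psi_{R,\pm}=\mathrm{id}$ outside $B_R$ so the integrand vanishes when both $\xi,\eta\notin B_R$. Integrating over $\R^{2n}\setminus(\C B_R)^2$ gives $|\K_R(u_{R,+})+\K_R(u_{R,-})-2\K_R(u)|\le C R^{-2}\K_R(u)\le C R^{-2}\E(u,B_R)$. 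Combining with the potential estimate finishes the proof.

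\textbf{Main obstacle.} The delicate point is making the pointwise Taylor estimate uniform in $(\xi,\eta)$, i.e. controlling the $h_\pm^T D^2_v F\, h_\pm$ term where $D^2_v F$ is only bounded by $F/|v|^2$: one must use the \emph{two} bounds $|h_\pm|\le \min(2R^{-1}\|\varphi\|_\infty,\ R^{-1}\|\nabla\varphi\|_\infty|\xi-\eta|)$ appropriately — the Lipschitz bound dominates when $|\xi-\eta|\lesssim 1$ (killing the $|v|^{-2}$ singularity near the diagonal: $|h_\pm|^2/|v|^2 \le R^{-2}\|\nabla\varphi\|_\infty^2$) and the sup bound when $|\xi-\eta|\gtrsim 1$. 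One must also check the intermediate point in the remainder of the Taylor expansion stays in $\Rn\setminus\{0\}$ and that $|v+h_\pm|$ is comparable to $|v|=|\xi-\eta|$ for $R$ large (since $|h_\pm|\le 2R^{-1}\|\varphi\|_\infty$ and, on the relevant set, $|\xi-\eta|$ is not forced to be large — but when $|\xi-\eta|$ is tiny the Lipschitz bound $|h_\pm|\le R^{-1}\|\nabla\varphi\|_\infty|\xi-\eta|$ again saves the comparability). This is exactly the step flagged in the introduction as forcing $n=2$ via a second-order Taylor expansion — though here the dimensional restriction actually enters later; this lemma itself holds in all dimensions.
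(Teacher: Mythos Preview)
Your approach is essentially the paper's: change variables $x=\Psi_{R,\pm}(\xi)$, Taylor-expand $F$ in its second slot, and use \eqref{partial1}--\eqref{partial2} together with the Lipschitz estimate $|\varphi(\xi/R)-\varphi(\eta/R)|\le R^{-1}\|\nabla\varphi\|_\infty|\xi-\eta|$ to produce the $R^{-2}$ gain. A few slips to clean up.

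First, the increment is $h_\pm=\pm(\varphi(\xi/R)-\varphi(\eta/R))e_1$, with no extra $R^{-1}$; the factor $R^{-1}$ appears only once you invoke the Lipschitz bound, and that bound alone already gives $|h_\pm|/|\xi-\eta|\le CR^{-1}$ uniformly in $(\xi,\eta)$. Your ``two-regime'' discussion is therefore unnecessary (and your sup bound $|h_\pm|\le 2R^{-1}\|\varphi\|_\infty$ is off by a factor of $R$; the correct bound $|h_\pm|\le 2\|\varphi\|_\infty$ would not yield the needed $R^{-2}$ anyway).

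Second, controlling the Taylor remainder requires bounding $F$ and its $x$-derivatives at the intermediate point $v+\theta h_\pm$ by $F(t,\xi-\eta)$. Comparability of moduli is not enough: the paper combines \eqref{mon2} and \eqref{homo} to get $F(t,v+\theta h_\pm)\le (1-CR^{-1})^{-(n+sp+1)}F(t,\xi-\eta)$ (this is \eqref{primader}), and you should make that step explicit.

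Finally, the potential term is cleaner than your write-up suggests: since $\det D\Psi_{R,\pm}(y)=1\pm R^{-1}\partial_1\varphi(y/R)$ is \emph{exact} (rank-one perturbation of the identity), the $\pm$ contributions sum to exactly $2\int_{B_R}W(u)$, and there is no $O(R^{-2})$ residue to absorb.
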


\begin{proof}

From here on, we denote for simplicity
\[ u=u(y), \; \bar u= u(\bar y), \; \varphi= \varphi\left( \frac{y}R \right), \;   \bar \varphi= \varphi\left(\frac{\bar y}R\right).\] 
Notice that
\eqlab{\label{normaphi} |\varphi  - \bar \varphi | \leq \frac{ \|\varphi\|_{C^1(\Rn)}}R |y-\bar y|}
and that for any $\delta\in[-1,1]$ 
  \eqlab{
 	 \label{stimamod}
  			| y-\bar y+ \delta e_1(\varphi-\bar \varphi)  | \geq 
  				\left(1-\frac{2\|\varphi\|_{C^1(\Rn)}}{R}\right)^{\frac12} 
  				|y-\bar y|.
  				}
  Indeed
  \[ | y-\bar y+ \delta e_1 (\varphi-\bar \varphi)  | ^2 = |y-\bar y|^2 +  \delta^2 (\varphi -\bar \varphi)^2 + 2 \delta(y_1-\bar y_1)(\varphi-\bar \varphi)  \geq |y-\bar y|^2 - 2| \delta| |y_1 -\bar y_1| |\varphi-\bar \varphi|.\] 
    Using \eqref{normaphi} we have that
  \[ 
  	| \delta| |y_1 -\bar y_1| |\varphi-\bar \varphi|
  	 \leq  \frac{\|\varphi\|_{C^1(\Rn)} }{R}   |y -\bar y|^2,
  \]
  hence \eqref{stimamod} is proved.
%  Thus 
%  \eqlab{
%  		\label{normaphi1} 
%  		\frac{|\varphi  - \bar \varphi |}{ | y-\bar y+ \delta e_1(\varphi-\bar \varphi)  |} \leq \frac{ \|\varphi\|_{C^1(\Rn)}}R |y-\bar y|}
%        }

Now, checking Lemma 4.3 in \cite{nonlocal}, one sees that $\Psi_{R,\pm}$ are diffeomorphisms for large $R$, and that the change of variables
\bgs{\label{change1} x:= \Psi_{R,\pm}(y) , \qquad  \bar x= \Psi_{R,\pm}(\bar y) }
gives
\bgs{\label{dx1} dx= 1\pm \frac{1}R\partial_{x_1}\varphi \left(\frac{y}R\right) + \mathcal{O}\left(\frac{1}{R^2}\right)\, dy }
and
	\eqlab{ 
		\label{dxdy} dx\,d\bar x= 1\pm \frac{1}R\partial_{x_1}\varphi  \pm  \frac{1}R \partial_{x_1} \bar \varphi  
		+ \mathcal{O}\left(\frac{1}{R^2}\right)\, dy\, d\bar y.
	}
With this change of variables, we have that
	\bgs{
		\label{FRR} F(u_{R,\pm}(x)-u_{R,\pm}(\bar x), x-\bar x)&= F (u(\Psi^{-1}_{R,\pm}(x))-u(\Psi^{-1}_{R,\pm}(\bar x)), x-\bar x) \\ &=F\left(u(y)-u(\bar y), y-\bar y +e_1 \left( \pm \varphi\mp \bar \varphi\right) \right).
		}
		Notice that $\Psi_{R,\pm}^{-1}(B_R)= B_R$ and 	 $\Psi_{R,\pm}^{-1}(\C B_R)= \C B_R$. Changing variables we have that
	\eqlab{\label{psibr}
		&\iint_{\R^{2n}\setminus (\C B_R)^2}  F(u_{R,\pm}(x)-u_{R,\pm}(\bar x), x-\bar x) \, dx \, d\bar x  
		\\
%			=&		\iint_{ \Psi_{R,\pm}^{-1}\left(\R^{2n}\setminus (\C B_R)^2\right) }  F(u-\bar u, y-\bar y \pm e_1(\varphi-\bar \varphi)) 
%			\left(1\pm \frac{1}R\partial_{x_1}\varphi  \pm  \frac{1}R \partial_{x_1} \bar \varphi  
%		+ \mathcal{O}\left(\frac{1}{R^2}\right)\right)\, dy\, d\bar y
%		\\
		= & \iint_{\R^{2n}\setminus (\C B_R)^2} F(u-\bar u, y-\bar y \pm e_1(\varphi-\bar \varphi)) 
			\left(1\pm \frac{1}R\partial_{x_1}\varphi  \pm  \frac{1}R \partial_{x_1} \bar \varphi  
		+ \mathcal{O}\left(\frac{1}{R^2}\right)\right)\, dy\, d\bar y.
		}				
Thanks to \eqref{stimamod}, \eqref{mon2} and \eqref{homo}, for any $R$ large enough and any $\delta \in [-1,1]$ we have the estimate
	\eqlab{ \label{primader}
				F(u-\bar u, y-\bar y \pm \delta e_1(\varphi-\bar \varphi)) 
				\leq &\,
				F\left(u-\bar u,\left(1-\frac{2\|\varphi\|_{C^1(\Rn)} }{R}\right)^{\frac12} (y-\bar y)\right)
				\\ 
				\leq &\,
				\left(1-\frac{2\|\varphi\|_{C^1(\Rn)} }{R}\right)^{\frac{-n-sp-1}{2} } F(u-\bar u, y -\bar y).
				} 				
We define the function 
		\[ 
			g\colon\R\to \R_+, \quad g(h):=F(u-\bar u, y-\bar y + h e_1 (\varphi-\bar \varphi) )
	\] 
 and we have that
 \eqlab{\label{gzero}
 		g(0)= F(u-\bar u, y-\bar y) , \quad g(\pm 1)=F(u-\bar u, y-\bar y \pm e_1(\varphi- \bar \varphi) ).
 		}
 Also, we take the derivatives
	 \bgs{     			
 			 	& g'(h)= \partial_{x_1} F ( u-\bar u, y-\bar y+he_1 ( \varphi-\bar \varphi ) ) (\varphi-\bar \varphi ) ,
 			 	\\
 			 	 &  g''(h)= \partial^2_{x_1} F ( u-\bar u, y-\bar y+h e_1( \varphi-\bar \varphi ) ) (\varphi-\bar \varphi )^2.
 			 	} 
Using \eqref{partial1},  \eqref{normaphi}, \eqref{stimamod} and \eqref{primader}  we obtain
	\bgs{
			|g'(h)|\leq  &\, c_1 |F ( u-\bar u, y-\bar y+he_1 ( \varphi-\bar \varphi ) )| \frac{|\varphi -\bar \varphi|  }{|y-\bar y+he_1 ( \varphi-\bar \varphi )|}
			\\
			\leq &\; c_1  \left(1-\frac{2\|\varphi\|_{C^1(\Rn)} }{R}\right)^{\frac{-n-sp-2}{2}} \frac{\|\varphi\|_{C^1(\Rn)} } R
			F(u-\bar u, y-\bar y) ,
			}
			hence
			\eqlab{ \label{gprimo}
				 |g'(h)|\leq g(0)\mathcal O \left(\frac{1}{R}\right).
				 }
			In the same way,  using  \eqref{partial2}  we get that
			\eqlab{ \label{gsec}
				 |g''(h)|\leq g(0)\mathcal O \left(\frac1{R^2}\right).
				 }
By \eqref{C2} since $g\in C^2(\R)$ with a Taylor expansion we have  
		\[ 
		g(h)=g(0)+g'(\delta)h
			\]
 for some $\delta =  \,\delta(h) \in (0,h)$, hence
 \bgs{\label{g11} g(1)= g(0)+g'(\delta_+) ,\qquad g(-1)= g(0)-g'(\delta_-), \qquad \textcolor{black}{\mbox{ for some }}\delta_+\in(0,1), \delta_- \in (-1,0).}
 Moreover, there exists $\tilde \delta \in (\delta_-,\delta_+)$ such that
 \eqlab{
 		\label{secondg} g'(\delta_+) - g'(\delta_-) = g''(\tilde \delta) 				(\delta_+-\delta_-).
 	} 
So with this Taylor expansions and formula \eqref{psibr} we obtain
 \eqlab{ \label{krru11}
  \K_R(u_{R,+}) + \K_R(u_{R,-}) = &
 		\iint_{\R^{2n}\setminus (\C B_R)^2}  g(1)
			\left(1+ \frac{1}R\partial_{x_1}\varphi  +  \frac{1}R \partial_{x_1} \bar \varphi  
		+ \mathcal{O}\left(\frac{1}{R^2}\right)\right) 
		 \\
		&\quad +g(-1)
  \left(1- \frac{1}R\partial_{x_1}\varphi  -  \frac{1}R \partial_{x_1} \bar \varphi  
		+ \mathcal{O}\left(\frac{1}{R^2}\right)\right)\, dy\, d\bar y
		  \\ 
		=  &\; \iint_{\R^{2n}\setminus (\C B_R)^2} g(0) \left(2+\mathcal O\left(\frac{1}{R^2}\right) \right) \, dy\,d\bar y  
			\\
			& + \; \iint_{\R^{2n}\setminus (\C B_R)^2} \left(g'(\delta_+) -g'(\delta_-) \right)  \left(1+\mathcal O\left(\frac{1}{R^2}\right) \right) \, dy\,d\bar y
 \\
 		&+\;\iint_{\R^{2n}\setminus (\C B_R)^2} \frac{1}R   \left( g'(\delta_+) + g'(\delta_-)\right)  (\partial_{x_1} \varphi + \partial_{x_1} \bar \varphi)  \, dy\,d\bar y   
 \\
 = 	&\; \iint_{\R^{2n}\setminus (\C B_R)^2}g(0) \left(2+\mathcal O\left(\frac{1}{R^2}\right) \right)+T_1(y,\bar y)+T_2(y,\bar y) \, dy \, d\bar y.}
% We notice that in the following computations, the constants may change value from line to line.
% We see that
% \[ T_1 = F(u-\bar u,y-\bar y) \left(2+\mathcal O\left(\frac{1}{R^2}\right) \right)  \, dy\,  d \bar y .\]
 In order to have an estimate on $T_1$, we use \eqref{secondg} and get that
 \bgs{T_1(y,\bar y) \leq  & \; \Big| g'(\delta_+) -g(\delta_-) \Big|  \left(1+\mathcal O\left(\frac{1}{R^2}\right) \right) \, dy\,d\bar y  \\  
    \leq&\;  \Big| g''(\tilde \delta) \Big|  \left(2+\mathcal O\left(\frac{1}{R^2}\right) \right) \, dy\,d\bar y
     ,} 
  where we have used that $\delta_+-\delta_-\leq2$. By \eqref{gsec} we obtain
  \[T_1(y,\bar y)\leq g(0)\mathcal O\left(\frac{1}{R^2}\right).\] 
   On the other hand
 \bgs{
 		 T_2(y,\bar y) \leq\frac{2\|\varphi\|_{C^1(\Rn)} } R 
		 \left(|g'(\delta_+)|+|g'(\delta_-)|\right) , 
		    }
		    which by \eqref{gprimo} leads to
		    \[ T_2(y,\bar y) \leq g(0)\mathcal O\left( \frac{1}{R^2} \right).\]
Therefore in \eqref{krru11} we have that
  \bgs{& 
  		\K_R(u_{R,+}) + \K_R(u_{R,-}) \leq \iint_{\R^{2n}\setminus (\C B_R)^2} g(0) \left(2+\mathcal O\left(\frac{1}{R^2}\right) \right) \, dy\,d\bar y  =  \K_R(u)  \left(2+\mathcal O\left(\frac{1}{R^2}\right) \right).}
  For the potential energy, the computation easily follows. It suffices to apply the change of variables \eqref{change1} and to recall that  $\Psi_{R,\pm}^{-1}(B_R)= B_R$. We get
 \bgs{ \int_{B_R} W(u_{R,+}(x)) \, dx &+ \int_{B_R}W(u_{R,-}(x) )\, dx\\
 		= &\;\int_{B_R}W(u (\Psi^{-1}_{R,+}(x)))\, dx +\int_{B_R}W(u (\Psi^{-1}_{R,-}(x))) \, dx\\
 		= &\; \int_{B_R}W(u (y)) \bigg(1+ \frac{1}R\partial_{x_1}\varphi \left(\frac{y}R\right)+ \mathcal{O}\left(\frac{1}{R^2}\right)\bigg)\, dy  \\
 		&\;+ \int_{B_R}W(u (y)) \bigg(1- \frac{1}R\partial_{x_1}\varphi \left(\frac{y}R\right) + \mathcal{O}\left(\frac{1}{R^2}\right)\bigg)\, dy\\
 		=&\; \bigg(2+\mathcal{O}\left(\frac{1}{R^2}\right)\bigg)  \int_{B_R}W(u (y)) \, dy.}  This concludes the proof of Lemma \ref{first}.
\end{proof}

We give now the following uniform bound on large balls of the energy of the minimizers. This result is an adaptation of Theorem 1.3 in \cite{densityEs} and it works in any dimension.

\begin{thm}\label{thmunif} Let $F$ satisfy \eqref{sym}, \eqref{mon} and \eqref{integr} and $W$ satisfy \eqref{www}.
If $u$ is a minimizer in $B_{R+2}$ for a large $R$, such that $|u|\leq 1$, then
\[ \E(u,B_R)\leq  \begin{cases}
						CR^{n-1} \quad &\mbox{if} \quad s\in \big(\frac{1}{p},1\big),\\
						CR^{n-1}\log R \quad &\mbox{if} \quad s=\frac{1}{p},\\
						CR^{n-sp} \quad &\mbox{if} \quad s\in \big(0,\frac{1}{p}\big),
					\end{cases}
	\]
	for some positive constant $C$ depending on $n, s$ and $W$.
%Furthermore, if $u$ is a global minimizer, then 
%	\[ 		
%		\lim_{R \to \infty} \frac{1}{R^n} \E(u,B_R)=0.
%	\]
\end{thm}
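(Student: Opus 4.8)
## Proof proposal

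The plan is to build an explicit competitor for $u$ in $B_R$ by gluing $u$ to a fixed profile on an annular region, and then to use the minimality of $u$ together with the integrability bounds \eqref{integr} to control the energy. More precisely, fix a one-dimensional ``layer'' profile $u_0\colon\R\to[-1,1]$ with $u_0(-\infty)=-1$, $u_0(+\infty)=1$ which is a minimizer for the one-dimensional reduced energy, and fix a smooth cutoff $\eta$ supported in $B_R$ with $\eta\equiv 1$ on $B_{R-1}$ and $|\nabla\eta|\le C$. Define the competitor $v:=\eta\, u+(1-\eta)\,\bar u$ where $\bar u$ is the reference function agreeing with $u$ on $\C B_R$ (so that $v=u$ on $\C B_R$, legitimizing $v$ as a competitor). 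Since $|u|,|\bar u|\le1$ we have $|v|\le1$.

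The main step is the energy comparison. By minimality, $\E(u,B_R)\le\E(v,B_R)$, so it suffices to bound $\E(v,B_R)$. The potential term is immediate since $\int_{B_R}W(v)\le\|W\|_{L^\infty([-1,1])}|B_R|$, but this only gives $O(R^n)$, which is too weak; to get the sharper rates one should instead choose the competitor so that $W(v)$ vanishes except in a boundary-layer region of width $O(1)$ near $\partial B_R$, i.e. take $v$ to equal the appropriate constant $\pm1$ on most of $B_R$, transitioning only in a collar. Then $\int_{B_R}W(v)\le C R^{n-1}$. For the kinetic term $\K_R(v)$, split the double integral via \eqref{krusym} into $v(B_R,B_R)$ and $v(B_R,\C B_R)$, and use the upper bound in \eqref{integr}, $F(t,x)\le c^*|t|^p/|x|^{n+sp}$, to reduce everything to estimating Gagliardo-type integrals $\iint |v(x)-v(y)|^p|x-y|^{-n-sp}$. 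Since $v$ is constant away from the collar, the only contributions come from pairs where at least one point lies in the collar; a standard computation (splitting by $|x-y|\lessgtr 1$ and using $|v(x)-v(y)|\le2$ for the far part, $|v(x)-v(y)|\le\|\nabla v\|_\infty|x-y|\le C|x-y|$ for the near part) produces exactly the three regimes: the near-diagonal integral $\int_{|x-y|<1}|x-y|^{p-n-sp}dy$ converges iff $p-sp>0$, i.e. $sp<p$, always true, giving $O(R^{n-1})$ from the collar volume, while the far part $\int_{|x-y|>1}|x-y|^{-n-sp}dy$ is finite and contributes $O(R^{n-1})$ when $sp>1$, $O(R^{n-1}\log R)$ when $sp=1$, and $O(R^{n-sp})$ when $sp<1$ (here the tail interaction of the whole collar with the far-away constant region dominates).

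I would organize the estimate so that the dichotomy $s\gtrless 1/p$ emerges naturally from whether the kernel $|x-y|^{-n-sp}$ is integrable at infinity against a region of diameter $\sim R$. The hard part will be bookkeeping the interaction between the collar and the far region $\C B_R$ (and between two distant points of $B_R$) carefully enough to land the sharp exponent in the $sp<1$ case — this is where one cannot be wasteful, since a crude bound gives $O(R^n)$. One should also double-check that the lower bound in \eqref{integr} (the subtracted term $|x-y|^{-n-sp+p}$) does not spoil the comparison: it only makes $F$ smaller, hence only helps in bounding $\E(v,B_R)$ from above, while for the conclusion $\E(u,B_R)\le\E(v,B_R)$ we never need a lower bound on $F$ applied to $v$. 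Finally, $\lim_{R\to\infty}R^{-n}\E(u,B_R)=0$ follows immediately from the three displayed bounds since $n-1<n$, $n-sp<n$, and $R^{n-1}\log R=o(R^n)$.
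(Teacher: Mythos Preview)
Your overall strategy---build a competitor that equals $\pm 1$ on most of the ball and use minimality---is the right one, and your treatment of the potential term and of the ``far'' interaction $B_{R-1}\times\C B_R$ (where $|x-y|\ge 1$) would indeed produce the correct rates. But the argument breaks down at the boundary of the minimization domain. The sentence ``since $v$ is constant away from the collar, the only contributions come from pairs where at least one point lies in the collar'' is false: on $\C B_R$ the competitor must equal $u$, which is not constant, so pairs in $B_{R-1}\times\C B_R$ contribute (you can still bound this piece, as you essentially note). More seriously, your near-diagonal estimate $|v(x)-v(y)|\le\|\nabla v\|_\infty|x-y|$ cannot be justified for pairs with $x$ in the collar and $y\in\C B_R$: either your competitor does \emph{not} interpolate to $u$ in the collar, in which case there is a jump across $\partial B_R$ and the Gagliardo energy is infinite when $sp\ge 1$; or it \emph{does} interpolate to $u$, in which case $\nabla v$ involves $\nabla u$ and, more to the point, $|v(x)-v(y)|^p$ produces a term $|u(x)-u(y)|^p$ whose integral is exactly part of the quantity you are trying to bound---the argument becomes circular. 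So your scheme as written works only for $s<1/p$, and has a genuine gap for $s\ge 1/p$.

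The paper avoids this difficulty not by an explicit interpolation but by taking the competitor $v=\min\{u,\psi\}$, where $\psi$ is the explicit radial profile equal to $-1$ on $B_{R+1}$ and $+1$ outside $B_{R+2}$. Then automatically $v=u$ on $\C B_{R+2}$ (so $v$ is admissible for minimality in $B_{R+2}$) and $v=\psi$ on $B_{R+1}$. On the transition set $A=\{v=\psi\}$ one has, for $x\in A$, $y\notin A$, the pointwise inequality $|v(x)-v(y)|\le\max\{|u(x)-u(y)|,|\psi(x)-\psi(y)|\}$, and then the monotonicity hypothesis \eqref{mon} gives $F(v(x)-v(y),x-y)\le F(u(x)-u(y),x-y)+F(\psi(x)-\psi(y),x-y)$. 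The $u$-dependent cross term then cancels against the corresponding term in $\E(u,B_{R+2})$ when one subtracts, leaving only the explicit $\psi$-energy to estimate. This min-trick plus \eqref{mon} is precisely the missing ingredient in your proposal; without it, the interaction across $\partial B_R$ cannot be controlled independently of $u$.
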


This type of energy estimates for the fractional Laplacian are proved in \cite[Theorem 1.3]{densityEs} (see also \cite[Theorem 4.1.2]{nonlocal}). We give a sketch of the proof following \cite{densityEs}, pointing out which assumptions on $F$ make the proof work in our case.

\begin{proof}[Proof of Theorem \ref{thmunif}]
As a first step, one introduces the auxiliary functions 
	\bgs{\psi(x):=-1+2 \min&\big\{ (|x|-R-1)_+,1\big\}, \quad v(x):=\min \big\{u(x), \psi(x)\big\}, \\& 
	 d(x):= \max\big\{ (R+1-|x|),1 \big\}.}
	 Notice that $|\psi|, |v| \leq 1$.  For $|x-y|\leq d(x)$ we have that 
	\begin{equation}\label{acpsi1} |\psi(x)-\psi(y)|\leq \frac{2 |x-y|}{d(x)}.\end{equation} Moreover,
		one obtains the estimate
	\begin{equation} \label{acdx1} \int_{B_{R+2}} d(x)^{-sp}\, dx \leq 
	\begin{cases} 
			 CR^{n-1}  \quad & \mbox{if}  \quad s\in \big(\frac{1}{p},1\big),\\
			 CR^{n-1}\log R   \quad &\mbox{if} \quad s=\frac{1}{p},\\
			 CR^{n-sp}  \quad & \mbox{if}  \quad s\in \big(0,\frac{1}{p}\big) .
		\end{cases}
	\end{equation}
	Also, by \eqref{integr} and \eqref{acpsi1} we get that
	\bgs{ &\int_{\Rn} F(\psi(x)-\psi(y),x-y) \, dy \leq c^*\int_{\Rn} \frac{|\psi(x)-\psi(y)|^p}{|x-y|^{n+sp}}\,dy  \\
	\leq &\; c^*\int_{|x-y|\leq d(x)  } \frac{|\psi(x)-\psi(y)|^p}{|x-y|^{n+sp}} \,dy +c^*\int_{|x-y|\geq  d(x)  } \frac{|\psi(x)-\psi(y)|^p}{|x-y|^{n+sp}}\,dy \\
	\leq &\;c^*\,  d(x)^{-p} \int_{|x-y|\leq d(x)}  |x-y|^{p-n-sp} \, dy  + c^*\,\int_{|x-y|\geq d(x)} |x-y|^{-n-sp }\, dy \leq c d(x)^{-sp}.}
It follows that
\bgs{ \E(\psi, B_{R+2}) \leq&\;  \int_{B_{R+2} } \left(\int_{\Rn}  F(\psi(x)-\psi(y),x-y) \, dy\right)\, dx + \int_{B_{R+2}} W(\psi)\, dx\\
\leq &\;c \int_{B_{R+2}} d(x)^{-sp} \, dx+ \int_{B_{R+2}} W(\psi)\, dx.} 
Moreover $W(-1)=0$ and $\psi=-1$ on $B_{R+1}$, so
\[ \int_{B_{R+2}	} W(\psi)\, dx = \int_{B_{R+2}\setminus B_{R+1} } W(\psi)\, dx\leq C R^{n-1}.\]
	With this, we obtain the bound
	\begin{equation} \label{psir2} \E(\psi, B_{R+2})  \leq 
		\begin{cases} 
			 CR^{n-1}  \quad & \mbox{if}  \quad s\in \big(\frac{1}{p},1\big),\\
			 CR^{n-1}\log R   \quad &\mbox{if} \quad s=\frac{1}{p},\\
			 CR^{n-sp}  \quad & \mbox{if}  \quad s\in \big(0,\frac{1}{p}\big) ,
		\end{cases}
	\end{equation}
	where $C=C(n,s,p)>0$. \\ Letting
	\[ A:=\{v=\psi\},\] 
	%we denote by $\C A=\left(\R^{2n}\setminus(\C B_R)^2\right) \setminus A$. 
	we notice that $B_{R+1}\subseteq A\subseteq B_{R+2}$ and that for $x\in A, y\in \C A$
	\[ |v(x)-v(y)|\leq \max\big\{|u(x)-u(y)|, |\psi(x)-\psi(y)|\big\}.\] 
%	Let
%	\[ T:=\bigg\{(x,y)\in A\times \C A \; \bigg|\;   \max\Big\{|u(x)-u(y)|, |\psi(x)-\psi(y)|\Big\}= |u(x)-u(y)|\bigg\}.\] 
	Then by \eqref{mon}
%	, on $T$ 
	we have that
	\[ F(v(x)-v(y),x-y)  \leq F(u(x)-u(y),x-y)+ F(\psi(x)-\psi(y),x-y)   .\]
%	while on $\C T$
%	\[ F(v(x)-v(y),x-y) \leq   F(\psi(x)-\psi(y),x-y)\leq F(u(x)-u(y),x-y)+  F(\psi(x)-\psi(y),x-y) .\]
Integrating on $A\times \C A$ we get that
	\[v(A,\C A)\leq u(A,\C A)+ \psi (A,\C A).\]
%	At the same manner, one obtains 
%	\[v(\C A, A)\leq  u(\C A, A)+ \psi (\C A, A).\]
	We recall that $u$ is a minimizer in $B_{R+2}$, and $u=v$ outside of $B_{R+2}$ (and outside of $A$), so
	\bgs{ 0\leq &\; \E(v,B_{R+2})-\E(u,B_{R+2}) =\E(v,A)-\E(u,A) .}
	Since $v=\psi$ on $A$, it follows that
	\bgs{ u(A,A) + \int_A W(u)\, dx \leq \E(\psi,A)}
	and, given that $B_{R+1}\subseteq A\subseteq B_{R+2}$, 
		\bgs{u(B_{R+1},B_{R+1}) + \int_{B_{R+1}} W(u)\, dx \leq \E(\psi,B_{R+2}).} Also, one has that
	\bgs{ u(B_R,\C B_{R+1}) \leq &\;C \int_{B_{R+2} } d(x)^{-sp} \, dx 
	\leq  \begin{cases} 
			 CR^{n-1}  \quad & \mbox{if}  \quad s\in \big(\frac{1}{p},1\big),\\
			 CR^{n-1}\log R   \quad &\mbox{if} \quad s=\frac{1}{p},\\
			 CR^{n-sp}  \quad & \mbox{if}  \quad s\in \big(0,\frac{1}{p}\big).
		\end{cases}}
		Using this together with the estimate \eqref{psir2}, we obtain the claim of Theorem \ref{thmunif}.
	\end{proof}

We have the following very useful lemma.  
\begin{lem}\label{maxmin} Let $F$ be convex in the first variable and such that it satisfies \eqref{sym}, and  let $W$ be such that it satisfies \eqref{wloc}.
Let   $\Omega$ be a measurable set and $u,v\colon \R^n \to \R$ be two measurable functions. Let \[ m:=\min\{u,v\}, \qquad M:=\max\{u,v\},\]
then
\[ \E(m,\Omega) +\E(M,\Omega) \leq \E(u,\Omega)+\E(v,\Omega).\]
\end{lem}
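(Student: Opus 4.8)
The plan is to reduce the global inequality $\E(m,\Omega)+\E(M,\Omega)\le \E(u,\Omega)+\E(v,\Omega)$ to a pointwise inequality on the integrands, separately for the kinetic part $\K$ and the potential part. For the potential part this is immediate: at each point $x$, the unordered pair $\{m(x),M(x)\}$ equals the unordered pair $\{u(x),v(x)\}$, so $W(m(x))+W(M(x))=W(u(x))+W(v(x))$, and integrating over $B_R$ gives equality for the potential terms. Hence everything rests on the kinetic term, and it suffices to show the pointwise estimate
\[
F(m(x)-m(y),x-y)+F(M(x)-M(y),x-y)\le F(u(x)-u(y),x-y)+F(v(x)-v(y),x-y)
\]
for a.e. $(x,y)$, after which one integrates over $\R^{2n}\setminus(\C B_R)^2$ (a symmetric domain) to conclude. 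Note that no superadditivity in the $x$-variable is needed, since the second argument $x-y$ is the same on both sides at a fixed $(x,y)$.

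The key step is therefore the pointwise inequality, which I would prove by cases according to the ordering of $u(x),v(x)$ and of $u(y),v(y)$. There are two essentially distinct cases. \textbf{Case 1:} $u$ and $v$ are ordered the same way at $x$ and at $y$ — say $u(x)\ge v(x)$ and $u(y)\ge v(y)$ (or the reversed inequalities). Then $M(x)-M(y)=u(x)-u(y)$ and $m(x)-m(y)=v(x)-v(y)$, so the left-hand side literally equals the right-hand side. \textbf{Case 2:} the orderings disagree — say $u(x)\ge v(x)$ but $u(y)\le v(y)$. Then $M(x)-M(y)=u(x)-v(y)$, $m(x)-m(y)=v(x)-u(y)$, while the right-hand side involves $u(x)-u(y)$ and $v(x)-v(y)$. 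Writing $a=u(x)-u(y)$ and $b=v(x)-v(y)$, one checks that $M(x)-M(y)$ and $m(x)-m(y)$ are of the form $\lambda a+(1-\lambda)b$ and $(1-\lambda)a+\lambda b$ for a suitable $\lambda\in[0,1]$ determined by the relative sizes of the increments; indeed $u(x)-v(y)$ and $v(x)-u(y)$ both lie in the interval $[\min\{a,b\},\max\{a,b\}]$ and their sum equals $a+b$, so they are a "majorization-dominated" pair for $\{a,b\}$. Applying convexity \eqref{convex} twice — $F(\lambda a+(1-\lambda)b,x-y)\le \lambda F(a,x-y)+(1-\lambda)F(b,x-y)$ and the analogous bound with $\lambda$ and $1-\lambda$ swapped — and adding yields exactly the desired pointwise inequality, since the $\lambda$ and $1-\lambda$ coefficients sum to $1$ on each of $F(a,\cdot)$ and $F(b,\cdot)$. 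The symmetry assumption \eqref{sym} is not strictly needed here but guarantees the integrand is well-behaved under the change of variables $(x,y)\mapsto(y,x)$ on the symmetric domain.

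I expect the only mildly delicate point to be the bookkeeping in Case 2: identifying the correct convex combination and verifying that the pair $\bigl(u(x)-v(y),\,v(x)-u(y)\bigr)$ is dominated by $\bigl(u(x)-u(y),\,v(x)-v(y)\bigr)$ in the sense that both entries lie between $\min$ and $\max$ of the latter pair and the sums agree. Once that elementary observation is in place, the convexity inequality \eqref{convex} does all the work, and the integration step is routine given that $\E(\cdot,\Omega)$ is finite on the relevant class (which here is not even needed, as the inequality holds integrand-wise with all terms nonnegative by the hypothesis $F\ge 0$). I would also remark explicitly, as the statement already does, that \eqref{convex} is weaker than the strict monotonicity \eqref{uconvex}, so the lemma applies in the setting of Theorem \ref{Theorem}.
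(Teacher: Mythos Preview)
Your proposal is correct and follows essentially the same route as the paper: both split according to whether the orderings of $(u,v)$ agree at $x$ and at $y$ (giving pointwise equality) versus the crossed case, and both resolve the crossed case by writing the two ``inner'' differences as complementary convex combinations of the two ``outer'' ones and summing the two convexity bounds. The only cosmetic difference is the parameterization---the paper writes the four relevant first arguments as $e,\,e{+}a,\,e{+}b,\,e{+}a{+}b$ with $\lambda=b/(a{+}b)$---and, as you note, your framing avoids the implicit appeal to \eqref{sym} that the paper makes when identifying $F(M(y)-m(x),x-y)$ with $F(u(x)-u(y),x-y)$.
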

%prove now that the sum of the energies of the maximum and minimum between two functions remains below the sum of the energies of the original functions. We can prove this lemma under the hypothesis that $F$ is convex (i.e., assumption \eqref{uconvex} can be weakened for the purpose of this lemma). So, supposing that 
%\eqlab{\label{convex} F(\lambda t +(1-\lambda)\tau,x)\leq 
%		  		\lambda F(t,x)+(1-\lambda) F(\tau,x)
%		  		}
% 		  for any $ \lambda \in (0,1), t,\tau\in\R,$ and almost any   			 $x \in \Rn,$
%  		 we have the following. 
We omit the proof of this known result, a complete proof is given e.g. in  \cite[Lemma 4.5.15]{tesiluca} (and the forthcoming paper \cite{cl}), while a general abstract version in \cite{sunra}.

\begin{prop}\label{blah}
\noindent If $W$ satisfies \eqref{www}, let
	\sys[ \tilde W:=]{ &W, & \mbox{ in } \; &[-1,1]\\
						&0, & \mbox{ in } \; & \R \setminus [-1,1]
						,} 
			\[	
			\E (u,B_R) := \K_R(u) + \int_{B_R} W (u) \, dx, \qquad\mbox{ and } \qquad  \tilde \E (u,B_R) := \K_R(u) + \int_{B_R} \tilde W (u) \, dx.
				\]
				For any measurable function $\tilde u\colon \Rn \to \R$, denote also
				\[
				  u := \max\left\{ \min\left\{ \tilde u,1\right\}, -1\right\}.
				\]
				
				a) It holds that
				$\E(u,B_R) = \tilde \E (u, B_R) \leq \tilde \E (\tilde u, B_R)$.
				
				\smallskip			
				\noindent Furthermore, if $\varphi \in \X_R$ is such that $|\varphi|\leq 1$ and 
				 			 
									b) if $u\in \B$ is a minimizer for $\E(\cdot, B_R)$, then $u$ is a minimizer for $\tilde \E (\cdot, B_R)$;

					 c) if $\tilde u\in \B$ is a minimizer for $\tilde \E(\cdot, B_R)$, then $u$ is a minimizer for $\E(\cdot, B_R)$.
		
				\end{prop}
			
\begin{proof}
Notice at first that $\tilde W$ satisfies \eqref{wloc}. Then by Lemma \ref{maxmin} and using the notations therein we have that
\[ \max \big\{ \tilde \E(m, B_R), \tilde \E(M,B_R)\big \}  \leq \tilde \E(u,B_R) + \tilde \E(v,B_R).\]
given that $F,W\geq0$ (hence $\tilde \E(w,B_R) \geq 0$ for any measurable function $w$). 
Moreover, since $W(\pm 1)=0$ we get that  
	\[ 
		\tilde \E(\pm 1, B_R)=0.
	\] 
	Therefore denoting 
	\bgs{
		& \bar u =\min \{ \tilde u,1 \}, \qquad 
		%\bar w = \max \{ u,1 \}  \\
			%	& \tilde w =\min\{ \bar u,-1\}, \qquad 
				%\tilde u = \max\{ \bar u,-1\},
		}
			we obtain
	\[
		\tilde \E(  u , B_R ) \leq \tilde \E( \bar u ,B_R) \leq\tilde \E (\tilde u, B_R),
		\]
		which  concludes a).\\		
		For b), let $\tilde w\colon \Rn \to \R$ be any competitor for $u$, hence such that $\tilde w=\varphi$ on $\C B_R$. Denoting
		\[
		 w := \max\left\{ \min\left\{ \tilde w,1\right\}, -1\right\},
		\]
		by the minimality of $u$ and according to a) we have that
		\[ 
			\tilde \E (u, B_R) =\E (u,B_R) \leq  \E(w, B_R) =  \tilde \E(w, B_R) \leq \tilde \E (\tilde w, B_R).
		\]
		On the other hand, let $w\colon \Rn \to [-1,1]$ be any admissible competitor for $u$, thus such  $w=\varphi$ on $\C B_R$. From the minimality of $\tilde u$ and  according to a) we have that
		\[ 
			\E(u,B_R) =\tilde \E ( u, B_R) \leq \tilde \E (\tilde u, B_R) \leq \tilde \E (w, B_R) =  \E ( w, B_R),
		\] 
		thus c) is proved.
			\end{proof}	
%		Notice also that $\tilde u = \varphi$ on $\C B_R$, so if $u\in \B$ is a minimizer, then $\tilde u$ is a competitor, with smaller energy. This can be only if $\tilde u =u$, hence the conclusion of the Proposition.
			
% 	\begin{oss} \label{tild}
%	With the notations in Proposition \ref{tildee}, if $|u|\leq 1$ we have
%				\[ \tilde \E (u,B_R)  = \E(u,B_R).\]  	
%				%	In the rest of the proof, we write $\E(u,\cdot)$ when $|u|\leq 1$ and $\tilde\E(u,\cdot)$ otherwise. 
%	\end{oss}

\section{Proof of the main result}\label{thmm}
The proof of the main result follows the step of \cite [Theorem 4.2.1]{nonlocal}. We underline  the main ideas from \cite{nonlocal}, and focus on the new computations  needed for the type of energy here introduced.

\begin{proof}[Proof of Theorem \ref{Theorem}]
We organize the proof in four steps.

\medskip
\noindent\textbf{Step 1. A geometrical consideration} \\In order to prove that $u$ is one dimensional, one has to prove that the level sets of $u$ are hyperplanes. 
It is thus enough to prove that $u$ is monotone in any direction. Using this, one has that the level sets are both convex and concave, thus flat.

\medskip
\noindent \textbf{Step 2. Energy estimates} \\
Let $R>8$ and $\varphi \in C_c^{\infty}(B_1)$ such that $\varphi=1$ in $B_{1/2}$, and let $e=(1,0)$. 
Witgh the notations of Lemma \ref{first} we notice that
 \begin{align}
		&u_{R,\pm} (y)= u(y) \; &\text{for} \; &y \in \C B_R \label{urpiur1}\\
		&u_{R,+} (y)= u(y-e) \; &\text{for} \; &y \in B_{R/2}\label{urpiur2} ,
		\end{align} 
		and 
		\[  |u_{R,\pm}|\leq 1.\] 		
		We use the notations of Proposition \ref{blah} for $\tilde W, \tilde \E$. Since $u$ is a minimizer for $\E$ in $B_R$ and $u_{R,-}$ is a competitor, thanks to Lemma \ref{first} (applied to $\tilde \E$) and to Proposition \ref{blah} a),  we have that
	\[ \E(u_{R,+},B_R) -\E(u,B_R) \leq \E(u_{R,+},B_R) +\E(u_{R,-},B_R)- 2 \E(u,B_R)\leq \frac{C}{R^2}  \E(u,B_R).\] From Theorem \ref{thmunif}  applied for $n=2$ it follows that
		\eqlab{\label{blaa2} \lim_{R\to \infty} \left(\E(u_{R,+},B_R)- \E(u,B_R) \right)=0.}
		{We remark that this is the crucial point where we require $n=2$. }
	
	\medskip
\noindent\textbf{Step 3. Monotonicity} \\
Suppose by contradiction that $u$ is not monotone in any direction. So, denoting $e=(1,0)$, up to translation and dilation, we suppose that
\[ u(0) >u(e), \qquad \mbox{ and } \quad u(0)>u(-e).\]
For $R$ large enough, we denote
	\[
	 v_R(x):=\min\{ u(x), u_{R,+}(x)\}, \qquad w_R(x):=\max\{ u(x), u_{R,+}(x)\}	
	\] 
and remark that $v_R, w_R$ are continuous and that $|v_R|,|w_R|\leq 1$.
		So $v_R=w_R=u$ on $\C B_R$ and since $u$ is a minimizer
\[ \E(w_R,B_R)\geq \E(u,B_R).\]
Moreover, by Lemma \ref{maxmin} (applied to $\tilde \E$) and  Proposition \ref{blah} a), we have that
\[ \E(v_R,B_R) + \E(w_R,B_R)\leq \E(u,B_R)+\E(u_{R,+},B_R),\]
therefore 
\eqlab{\label{blaa1} \E(v_R,B_R)\leq \E(u_{R,+},B_R).}
Since $u(0)= u_{R,+}(-e)$ and $u(-e)=u_{R,+}(0)$, {using the  continuity of the two functions $u$ and $u_{R,+}$}, we obtain that
\bgs{ &v_R<u \; \mbox{ in a neighborhood of } 0\\
 & v_R=u \mbox{ in a neighborhood of } -e.} This implies that $v_R$ is not identically nor $u$, nor $u_{R,+}$.
 
  We remark now that $v_R$ is not a minimizer of $ \E(\cdot, B_2)$. Indeed, $u$ is a global minimizer, hence $u,v_R\in W^{s,p}(B_R)$ for any $R$, so $u,v_R\in \X_2$. Moreover,
\bgs{
  			%& v_R =u \; \; \mbox{ in } \; \C B_R \\
		& |u|,|v_R|\leq 1 &\mbox{ in } \; & \R^2 ,\\
		  &	v_R \leq u &\mbox{ in } \; & \R^2, 
		  \\
		  & v_R= u & \mbox{ in } \; & B_\delta(-e) \; \; \mbox{ for some } \delta>0. 
		  }
	If also $v_R$ is a minimizer for $\E(\cdot, B_2)$, then from Proposition \ref{blah} b), $u,v_R$ are minimizers for $\tilde \E (\cdot, B_2)$. Theorem \ref{maxp} (applied to $\tilde \E$) implies that $v_R=u$ on $B_2$, which gives a contradiction.
		  
%    then both $u$ and $v_R$ would satisfy equation \eqref{hhha} in $B_2$. According to the comparison principle in Lemma \ref{strong}, we would get that $u=v_R$ in $B_2$, which is false.
 %Take then $v_R^*$ to be the minimizer for $\E(\cdot, B_2)$, that agrees with $v_R$ outside of $B_2$. 
 %otice that $v_R \in L^\infty(\Rn)\cap W^{s,p}(B_R)$ for any $R$, thanks to $u$ being a minimizer, thus 
 According to Theorem \ref{existence} and to  Proposition \ref{blah} c), there exists  $v_R^*$ a minimizer of $ \E(\cdot, B_2)$ such that $v_R^*=v_R$  on $\C B_2$ and $|v_R^*|\leq 1$.
Let
 \[ \delta_R:=\E(v_R,B_2)- \E(v_R^*,B_2)\geq 0.\] 
 We prove that there exists an universal constant $c>0$ such that $\lim_{R\to \infty} \delta_R\geq c.$ 
% Suppose by contradiction that \[ \lim_{R\to \infty} \delta_R=0.\]
For this, we define as in Theorem 4.2.1 in \cite{nonlocal}
\bgs{&\; \tilde u(x)=u(x-e), \; \; m(x)=\min\{u(x),\tilde u(x)\}, \qquad  |\tilde u|, |m| \leq 1,
}
and observe that $m$ is not a minimizer for $ \E(\cdot, B_2)$. 
Indeed, $u,m\in W^{s,p}(B_R)$ for any $R$, so $u,m\in \X_2$. Moreover,
\bgs{
  			%& v_R =u \; \; \mbox{ in } \; \C B_R \\
		& |u|,|m|\leq 1 &\mbox{ in } \; & \R^2 ,\\
		  &	m \leq u &\mbox{ in } \; & \R^2, 
%		  \\
%		  & m= u & \mbox{ in } \; & B_\delta(-e) \; \; \mbox{ for some } \delta>0
		  }
		   and 
 \bgs{ 
 	&m<u \; \mbox{ in a neighborhood of } 0\\
 	& m=u \mbox{ in a neighborhood of } e.
 	} 
%		  Theorem \ref{maxp} implies that if $v_R$ was minimal, then $v_R$ would coincide with $u$ on $B_2$, which gives a contradiction.
%, so if one assumes that
%\bgs{\E(m,B_2) -\tilde \E(z,B_2)  \leq 0,}
% then $m$ would be a minimizer for $\tilde \E(\cdot,B_2)$. But 
% \bgs{
%  			%& v_R =u \; \; \mbox{ in } \; \C B_R \\
%		& |m|,|u|\leq 1,\\
%		  &	m \leq u \; \; \mbox{ in } \; \R^2, 
%%		  \\
%%		  & v_R= u \; \; \mbox{ in } B_\delta(-e) \; \; \mbox{ for some } \delta>0. 
%		  }		  
 Using also Proposition \ref{blah} b), if $m$ were a minimizer for $\E(\cdot, B_2)$, we would obtain a contradiction by the comparison principle in Theorem \ref{maxp}. According to Theorem \ref{existence} and Proposition \ref{blah} c), there exists $z$ a minimizer of $ \E(\cdot, B_2)$ such that $m=z$ in $\C B_2$ and $|z|\leq 1$. Furthermore, there exists some $c>0$ (independent of $R$) such that
 \bgs{\E(m,B_2) - \E(z,B_2)  :=c.}
Let 
 \bgs{
		%&\;  z(x) \mbox{ a competitor for } m(x) \mbox{ of } \;\; \tilde \E(\cdot, B_2),\quad  z(x)=m(x) \in \C B_2
		%\\ 
		&\; z_R(x):= \psi(x) z(x) +(1-\psi(x)) v_R(x),}
 with $\psi \in C^{\infty}_c(\Rn, [0,1])$ a cut-off function such that 
 \sys[\psi(x)=]{& 1, && x\in B_{R/4}\\
 				&0, && x\in \C B_{R/2},}
 				and notice that $|z_R|\leq 1$.
 	It holds that 
 	\bgs{ & m= v_R, \; \; z=z_R&&  \mbox{ in } B_2\\
 		& m= v_R= z=z_R=v_R*&&  \mbox{ in } B_{\frac{R}2}\setminus B_2\\
 		& v_R=z_R=v_R^*, \;\; m=z &&  \mbox{ in } B_R\setminus B_{\frac{R}2}\\
 		& m= z, \; \; u=v_R= v_R^*=z_R&&  \mbox{ in } \C B_{R}.}
 		With this in mind, we get that
	\eqlab{\label{ghf}
		 & c= \E(m,B_2) -\tilde \E(z,B_2) 
		 \\
		=&\; \E(m,B_2)-\E(v_R,B_2)+\delta_R + \tilde \E(v^*_R,B_2) -\tilde \E(z_R,B_2)+\tilde \E(z_R,B_2)-\tilde \E(z,B_2)\\
		\leq &\; \E(m,B_2)-\E(v_R,B_2)+ \E(z_R,B_2)- \E(z,B_2)+\delta_R ,
		}
		since $z_R$ is a competitor for $v_R^*$ in $B_2$. 
		Now, for $x\in B_2, y\in \C B_{R/2}$ we have $|x-y|\geq |y|/2$, hence
		\bgs{  
		&\E(m,B_2)-\E(v_R,B_2) 
		\\
		=  &\; 2\int_{B_2}\left( \int_{\C  B_{\frac{R}2} } F(m(x)-m(y),x-y) -F(m(x)-v_R(y), x-y)  dy \right)  dx
			\\
		\leq &\; 4 \int_{B_2} \int_{\C  B_{\frac{R}2} } F\left(2,\frac{y}2 \right) <C R^{-sp}   
		,}
given \eqref{integr} and \eqref{mon2}.
		 The same bound holds for $ \E(z_R,B_2)- \E(z,B_2) $.
%	\bgs{
%	&  \E(z_R,B_2)- \E(z,B_2) 
%	\\
%	=&\;  2\int_{B_2} \left(\int_{\C  B_{\frac{R}2} } F(z(x)-z_R(y),x-y) -F(z(x)-z(y), x-y) dy\right) dx 
%	\\
%	\leq &\; 2c^* \int_{B_2} \left(\int_{\C  B_{\frac{R}2} } 
%	\frac{|z(x)-z_R(y)|^p + |z(x)-z(y)|^p}{|x-y|^{n+sp}}  dy\right) dx   
%	\\
%	\leq &\; 2^p c^*\left(2 \|z\|^p_{L^p(B_2)} \int_{\C  B_{\frac{R}2} } |y|^{-n-sp} \, dy 
%	+ |B_2| \int_{\C  B_{\frac{R}2} } \frac{ |z_R(y)|^p +|z(y)|^p}{|y|^{n+sp}}\, dy \right)
%	\\
%	\leq &\; C R^{-sp},
%		}
%		where $C$ depends on $n,p,s,\|z\|_{L^2(B_2)}$. This last line follows since $z_R=v_R, z=m$ on $\C B_{R/2}$, therefore $ |z_R|, |z| \leq 1 $ on $\C B_{R/2}$. We obtain also that
%		 \[
%		  \E(m,B_2)-\E(v_R,B_2) \leq CR^{-sp}, 
%		 \] 
%		 with even simpler computation since both $m,v_R$ are bounded in $\R^2$. 
% On the other hand
It follows in \eqref{ghf} that
	\bgs{
		\label{nnn}& c \leq C R^{-sp} +\delta_R.
		}
Sending $R\to \infty$ we  get that
 \[ \lim_{R\to \infty} \delta_R \geq c>0.\]
 Finally, by minimality,
 \bgs{ \E(u,B_R) \leq  \E(v_R^*,B_R)=\E(v_R,B_R) -\delta_R \leq \E(u_{R,+},B_R) -\delta_R,}
 according to \eqref{blaa1}. Sending $R\to \infty$
 \[ c\leq  \lim_{R\to \infty} \left(\E(u_{R,+},B_R) - \E(u,B_R)\right) ,\]
 which contradicts \eqref{blaa2}. This concludes the proof of Theorem \ref{Theorem}.
\end{proof}

%%%%%%%%%%%%%%%%%%%%%%%%%%%%

\section{Some examples}\label{examples}
{We give in this section some examples of problems to which our results can be applied. We give two example of functions $F$, that agree with the requirements \eqref{sym} to \eqref{uconvex}. As we see here, the context considered is general enough to be applied to the energy related to the fractional Laplacian, the fractional $p$-Laplacian and to the nonlocal mean curvature. }

We consider in this section $W$ as in \eqref{www}.
%%%%%%%%%%%%%%

\begin{example} We consider
\eqlab{\label{plapl} F(t,x)=\frac{1}p \frac{|t|^p}{|x|^{n+sp}}}
for $p>1$ and $s\in (0,1)$. The nonlocal energy that we study is
	\eqlab{\label{plalplfrrr} 			
		\E(u,B_R)
		=\frac{1}p\iint_{\R^{2n}\setminus (\C B_R)^2} \frac{ |u(x)-u(y)|^p}{|x-y|^{n+sp}}\, dx \, dy
		 + \int_{B_R} W(u)\, dx.
		}
We note that the associated equation is given by
\[ (-\Delta)^s_p u +W'(u)=0,\] 
with $(-\Delta)^s_p$ being the fractional p-Laplacian, defined as 
\[(-\Delta)^s_p u (x)= P.V. \int_{\Rn} \frac{ |u(x)-u(y)|^{p-2} (u(x)-u(y))}{|x-y|^{n+sp}} \, dy .\] Notice that for $p=2$, we obtain the fractional Laplacian. The interested reader can see \cite{nonlocal,hitch,silvestre} and references therein for the fractional Laplacian, \cite{nhk,plapl,Korvenpa,korvenpaaobstacle} and references therein  for the fractional p-Laplacian, or more general fractional operators. 
 
 \medskip
 
It is not hard to verify that $F$ given in \eqref{plapl} satisfies 
%our requests. We notice that %\eqref{lsc}, 
\eqref{sym} to
% \eqref{mon}, \eqref{mon2}, \eqref{homo},  \eqref{integr}, 
\eqref{C2} and \eqref{C2t} to \eqref{uconvex}. Also, \eqref{partial1} and \eqref{partial2} follow after simple computations and hold for
\[ c_1= n+sp, \quad c_2=(n+sp)(n+sp+1).\]  
%{We give a short proof that $F$ satisfies  . We notice that 
%\[ \partial^2_t F(t,x)= \frac{(p-1)|t|^{p-2}}{|x|^{n+sp}} >0 
%\] 
% almost anywhere in $\R$ and $\partial^2_t F(\cdot,x)\in L^1_{loc}(\R) $, for any $x\in \R^n\setminus \{0\}$. We have the following short result.
%\begin{prop} Let $f\colon \R \to \R$ be such that $f'>0 $ almost anywhere in $\R$ and $f'\in L^1_{loc}(\R)$. Then $f$ is strictly increasing.
%\end{prop}
%\begin{proof}
%We take some $y>x$. We can write
%		\[
%			f(y)-f(x)=\int_x^y f'(t)\, dt\geq 0.
%			\]
%			Hence $f(y)\geq f(x)$, therefore if we suppose $f(y)=f(x)$,  we must have $f$ constant on $[x,y]$. This contradicts the fact that $f'>0$ on sets of positive measure. 
%			\end{proof}}

It follows that for $n=2$ bounded global  minimizers of \eqref{plalplfrrr} are one-dimensional, more precisely we have the following.
\begin{cor}
Let $u\colon \Rn \to [-1,1]$ be a continuous global minimizer of the energy \eqref{plalplfrrr}, with $W$ satisfying \eqref{www}. Then $u$ is one-dimensional.
\end{cor}
\noindent We remark that \cite[Corollary 3.3]{cozzidg}  gives that a global minimizer of \eqref{plalplfrrr} is continuous.

\end{example}

\bigskip

\begin{example}We consider the function related to the fractional mean curvature equation. Nonlocal minimal surfaces were introduced in \cite{nms} as boundaries of sets that minimize a nonlocal operator, namely the fractional perimeter. The first variation of the fractional perimeter operator is the nonlocal mean curvature, defined as the weighted average of the characteristic function, with respect to a singular kernel (the interested reader can check \cite{abat,bucval,dipiersurv} and other references therein).  For smooth hypersurfaces that are globally graphs, i.e. taking $\partial E$ as a graph in the $e_n$ direction defined by a smooth function $u$, the nonlocal mean curvature is given by
	\[ \mathcal{I}_s[E](x,x_n) =2 P.V.\int_{\R^{n}} \frac{dy}{|x-y|^{n+s}} \int_0^{\frac{u(x)-u(y)}{|x-y|}} \frac{d\rho}{(1+\rho^2)^{\frac{n+s+1}2}} \]
	for $s\in (0,1)$ and $(x,x_n)\in \partial E$, with $u(x)=x_n$ and taking $\nabla u(x)=0$.  See e.g. \cite{caffy},  the Appendix in \cite{bucval} for the proof of this formula. Also, the interested reader may see \cite[Chapter 4]{tesiluca} (and the forthcoming paper \cite{cl}) for the the mean curvature (and related nonparametric minimal surfaces).
	\\
	 We use the notations
\[ g(\rho) = \frac{1}{(1+\rho^2)^{\frac{n+s+1}2}}, \qquad G(\tau) =\int_0^{\tau} g(\rho) \, d\rho, \qquad \mathcal G(t) =\int_0^t G(\tau) \, d\tau.\] Notice that \[  G'(\tau) =g(\tau), \qquad \mathcal G'(t) =G(t) .\]
With this, the nonlocal energy to study is
	\eqlab{
		\label{ennms} \E(u,B_R) =\iint_{\R^{2n}\setminus (\C B_R)^2} \frac{dx \, dy}{|x-y|^{n+s-1} }  \mathcal G\left(\frac{u(x)-u(y)}{|x-y|}\right)+ \int_{B_R} W(u)\, dx,
	}
and  the relative equation is
	\[
	P.V. \int_{\Rn} \frac{dy}{|x-y|^{n+s} }  G\left( \frac{u(x)-u(y)}{|x-y|} \right) + W'(u)=0,  
	\] 
	see also Theorem 1.10 in \cite{dipierro2017decay} for other applications.
%In this context
% we have that
%\[\partial_t F(t,x) = \frac{1}{|x|^{n+s} }  \int_0^{\frac{t}{|x|}}\frac{d\rho}{(1+\rho^2)^{\frac{n+s+1}2}}.\] So 
So, we let
	\eqlab{
		\label{nms} F(t,x)= \frac{1}{|x|^{n+s-1} }   \mathcal G\left(\frac{t}{|x|}\right)
	} 
%
%thus
%\[ F(t,x)=\frac{1}{|x|^{n+s-1}} \mathcal G\left(\frac{t}{|x|}\right).\]
and prove that the requirements  \eqref{sym} to \eqref{uconvex} hold for $s\in (0,1)$ and $p=1$.\\
\noindent We notice that $g,\mathcal G$ are even, $G$ is odd and
	\eqlab{ 
		\label{elem} 0< g(\rho) \leq 1, \qquad  |G(t)| < \int_{-\infty}^{\infty} g(\rho)\, d \rho < C, \qquad 0\leq  \mathcal G(t) \leq C| t|.
	}
Also, the following chain of inequalities holds
\eqlab{\label{simpleineq} a^2g(a)\leq   aG(a) \leq 2 \mathcal G(a), \quad \mbox{ for any } a\geq 0.}  
Indeed, since $g$ is decreasing we have that
\[ G(a) = \int_0^a g(\rho) \, d \rho \geq \int_0^a g(a) \, d \rho= a g(a).\] 
Also, denoting $f(a)=aG(a)-2\mathcal G(a)$ we have that
\[ f'(a)=  a g(a) -  G(a) \leq 0.\]
So $f$ is decreasing and $f(a)\leq f(0)=0$ for $a\geq 0$. This proves the inequalities in \eqref{simpleineq}.
\\
It is easy to check that the assumptions \eqref{sym}, \eqref{mon},  \eqref{mon2} and \eqref{C2} hold for $F$ as in \eqref{nms}. Also for some $\beta>1$, since $g(\beta\rho)\leq g(\rho)$, we have that
\[ \mathcal G (\beta t)= \beta^2 \int_{0}^t d\tau \int_0^\tau g(\beta \rho)\, d\rho \leq  \beta^2 \mathcal G ( t).\]
 Thus for some $\alpha \in (0,1)$
 \[   F\left(t,\alpha x \right)\leq  \alpha^{-n-s-1} \frac{1}{|x|^{n+s-1}} \mathcal G \left(\frac{t}{|x|}\right),\] and we get \eqref{homo}.
Using \eqref{elem} we obtain
\[ F(t,x) = \frac{1}{|x|^{n+s-1}} \mathcal G\left(\frac{t}{|x|} \right) \leq c \frac{|t| }{|x|^{n+s}},\]
that is the right hand side of \eqref{integr} for $p=1$. The left hand side follows using the bound in  \cite[Lemma 4.2.1]{tesiluca} (and the forthcoming paper \cite{cl}), that is
	\[
		\mathcal G (\tau) \geq c_* (|\tau|-1). 
	\]
%	Indeed, this is obvious if $|\tau|\leq 1$. Else if say $\tau\geq 1$, then since $G$ is increasing 
%	\[G(\tau) \geq G(1) =\int_0^1 g(t) \, dt\geq \inf_{t\in[0,1]} g(t):=c_*,\]
%	and 
%	\[ G(t)\geq 0 \mbox{      for } t>0.\]
%	Thus we obtain
%	\[ 
%	\mathcal G (\tau) = \int_0^1 G(t) \, dt  + \int_1^\tau G(t) \, dt 
%	\geq  c_* (\tau -1).
%	\]
%	If $\tau \leq -1$ recall that $\mathcal G(\tau) = \mathcal G (-\tau) \geq c_* (-\tau-1)$. This gives the claim.}\\
By computing the derivative with respect to $x_1$ of $F$, we get that
\[ |\partial_{x_1} F(t,x)| \leq (n+s-1) \frac{F(t,x)}{|x|} + \frac{1}{|x|^{n+s} } \bigg| \frac{t}{|x|} G\left(\frac{t}{|x|}\right)\bigg|\leq c_1  \frac{F(t,x)}{|x|} \]
thanks to \eqref{simpleineq}. Moreover
\[ |\partial^2_{x_1} F(t,x)| \leq C_1 \frac{F(t,x)}{|x|^2} + \frac{1}{|x|^{n+s+1} } \bigg| \frac{t}{|x|} G\left(\frac{t}{|x|}\right)\bigg| + \frac{1}{|x|^{n+s+1} } \frac{t^2}{|x|^2} g\left(\frac{t}{|x|} \right)  \leq c_2  \frac{F(t,x)}{|x|^2} \]again by using \eqref{simpleineq}. 
So \eqref{partial1} and \eqref{partial2} are satisfied. 
We see also that 
	\[\left| \partial_t F(t,x)\right| \leq \frac{1}{|x|^{n+s}} \left| G\left(\frac{t}{|x|}\right)\right| \leq \frac{C}{|x|^{n+s}},\] where \eqref{simpleineq} and \eqref{elem} were used. Assumptions \eqref{C2t} and \eqref{1derivt} follow. 
Moreover, it is obvious from the definition of $\mathcal G$ that
\[ \partial_t^2 F(t,x)= \frac{1}{|x|^{n+s+1}} g\left(\frac{t}{|x|}\right) >0.\] 
From this \eqref{uconvex} is straightforward. 

  Theorem \ref{Theorem} then says that in $\R^2$, global minimizers of the energy \eqref{ennms} are one-dimensional. To our knowledge, this is a new result in the literature. The precise result goes as follows.
 \begin{cor}
Let $u\colon \Rn \to [-1,1]$ be a continuous global minimizer of the energy \eqref{ennms} with $W$ satisfying \eqref{www}. Then $u$ is one-dimensional.
\end{cor}

We remark that, up to our knowledge, the continuity of minimizers of the energy \eqref{ennms} is not known. Even for the classical case the problem is quite a delicate one, the interested reader can consult e.g. \cite{bombi,cafgar,trudy,wang}. 
\end{example}

\begin{appendix} 
\section{Some known results}

\begin{prop}\label{pony} Let $\Omega\subset \mathcal O \subset \Rn$ be bounded, open sets such that $|\mathcal O \setminus \Omega|>0$ and let $u\colon \Rn \to \R$ be a measurable function. Then
\bgs{ 
			\|u\|^p_{L^p(\Omega)} \leq &\; \frac{2^{p-1}}{|\mathcal O \setminus \Omega|} \left( d_{\mathcal O}^{n+sp} \int_{\Omega}\ \int_{\mathcal O \setminus \Omega}  \frac{|u(x)-u(y)|^p}{|x-y|^{n+sp}} \,dx \, dy+ |\Omega|\,  \|u\|^p_{L^p(\mathcal O \setminus \Omega)} \right),
			}
			with $d_{\mathcal O}=\mbox{diam}(\mathcal O)$.
\end{prop}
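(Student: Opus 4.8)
The inequality is a standard Poincaré-type bound, and the natural strategy is to estimate $\|u\|_{L^p(\Omega)}$ by comparing $u(x)$ with its values on the "reference" set $\mathcal O\setminus\Omega$, using the fact that the latter has positive measure. First I would write, for $x\in\Omega$ and $y\in\mathcal O\setminus\Omega$, the elementary inequality $|u(x)|^p\le 2^{p-1}\bigl(|u(x)-u(y)|^p+|u(y)|^p\bigr)$, coming from convexity of $t\mapsto|t|^p$. Then I would average this in $y$ over $\mathcal O\setminus\Omega$: dividing by $|\mathcal O\setminus\Omega|$ and integrating gives
\[
|u(x)|^p \le \frac{2^{p-1}}{|\mathcal O\setminus\Omega|}\left(\int_{\mathcal O\setminus\Omega}|u(x)-u(y)|^p\,dy + \int_{\mathcal O\setminus\Omega}|u(y)|^p\,dy\right).
\]

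The next step is to reinstate the singular kernel $|x-y|^{-n-sp}$ artificially in the first term. Since $x\in\Omega\subset\mathcal O$ and $y\in\mathcal O$, we have $|x-y|\le d_{\mathcal O}=\mathrm{diam}(\mathcal O)$, hence $1\le d_{\mathcal O}^{n+sp}\,|x-y|^{-n-sp}$, and therefore
\[
\int_{\mathcal O\setminus\Omega}|u(x)-u(y)|^p\,dy \le d_{\mathcal O}^{n+sp}\int_{\mathcal O\setminus\Omega}\frac{|u(x)-u(y)|^p}{|x-y|^{n+sp}}\,dy.
\]
Substituting this back and then integrating the resulting bound for $|u(x)|^p$ over $x\in\Omega$ yields exactly the claimed estimate: the double-integral term appears directly, and the last term becomes $|\Omega|\,\|u\|_{L^p(\mathcal O\setminus\Omega)}^p$ because the $y$-integral of $|u(y)|^p$ over $\mathcal O\setminus\Omega$ does not depend on $x$ and contributes a factor $|\Omega|$ upon integration in $x$.

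There is essentially no obstacle here; the only points requiring a word of care are that $|\mathcal O\setminus\Omega|>0$ (so the division is legitimate) and that all quantities may a priori be $+\infty$, in which case the inequality holds trivially. So the proof is just the chain of two elementary estimates above followed by Fubini/Tonelli to interchange the order of integration, which is permitted since the integrand is nonnegative.
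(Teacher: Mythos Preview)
Your proposal is correct and matches the paper's proof essentially line for line: the paper also writes $|u(x)|^p$ as an average over $y\in\mathcal O\setminus\Omega$, applies the convexity bound $|a+b|^p\le 2^{p-1}(|a|^p+|b|^p)$, inserts the kernel via $|x-y|^{n+sp}\le d_{\mathcal O}^{n+sp}$, and integrates in $x$ over $\Omega$.
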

\begin{proof}We have that
\bgs{
		|u(x)|^p = &\; |u(x)-u(y)+u(y)|^p 
		\\
		= &\;   \frac{1}{|\mathcal O \setminus \Omega|}  \int_{\mathcal O \setminus \Omega}  |u(x)-u(y)+u(y)|^p  \, dy
		\\
		\leq&\; \frac{2^{p-1}}{|\mathcal O \setminus \Omega|} \left( \int_{\mathcal O \setminus \Omega} \frac{|u(x)-u(y)|^p}{|x-y|^{n+sp}} |x-y|^{n+sp} + |u(y)|^p\, dy\right)
		\\
		\leq &\;  \frac{2^{p-1}}{|\mathcal O \setminus \Omega|} \left( 
		d_{\mathcal O}^{n+sp}\int_{\mathcal O \setminus \Omega}  \frac{|u(x)-u(y)|^p}{|x-y|^{n+sp}} \, dy
		+\int_{\mathcal O \setminus \Omega} |u(y)|^p\, dy\right).
		}
		The conclusion follows by integrating on $\Omega$.
		\end{proof}
		
		We recall also a fractional Poincar\'e inequality (see \cite[Proposition 2.1]{poing} for the proof).
		\begin{prop}[A fractional Poincar\'e inequality] \label{poincy}
		Let $\Omega\subset  \Rn$ be bounded, open set and let $u\colon \Rn \to \R$ be in $L^1(\Omega)$. Then
\bgs{ 
		\|u-u_{\Omega}\|_{L^p(\Omega)} \leq \left(\frac{d_{\Omega}^{{n+sp}}}{|\Omega|}\right)^{\frac{1}p}  [u]_{W^{s,p}(\Omega)},
		}
		where 
		\[ u_\Omega=\frac{1}{|\Omega|} \int_{\Omega} u(x) \, dx \qquad \mbox{ and } \qquad  d_{\Omega}=\mbox{diam}(\Omega).\] 
		
\end{prop}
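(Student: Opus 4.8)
The plan is to exploit the representation $u(x) - u_\Omega = \frac{1}{|\Omega|}\int_\Omega (u(x) - u(y))\, dy$, valid for a.e.\ $x \in \Omega$ since $u \in L^1(\Omega)$, and then to apply Jensen's inequality to this normalized average so that a double integral of $|u(x)-u(y)|^p$ appears; finally I insert and remove a factor $|x-y|^{n+sp}$ to recognize the Gagliardo seminorm.

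First I would observe that there is nothing to prove unless $[u]_{W^{s,p}(\Omega)}<\infty$, so we may assume this; it also lets us use Tonelli's theorem freely below, all integrands being nonnegative. Then, for a.e.\ $x\in\Omega$, by convexity of $t\mapsto|t|^p$ on $\R$ (recall $p\in[1,\infty)$) and Jensen's inequality with respect to the probability measure $\frac{1}{|\Omega|}\,dy$ on $\Omega$,
\[ |u(x) - u_\Omega|^p = \left| \frac{1}{|\Omega|}\int_\Omega (u(x) - u(y))\, dy \right|^p \leq \frac{1}{|\Omega|}\int_\Omega |u(x) - u(y)|^p\, dy. \]

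Next comes the only genuinely ``active'' step, which is elementary: whenever $x,y\in\Omega$ one has $|x-y|\leq d_\Omega=\mathrm{diam}(\Omega)$, hence $|x-y|^{n+sp}\leq d_\Omega^{n+sp}$ and therefore
\[ |u(x) - u(y)|^p = \frac{|u(x) - u(y)|^p}{|x-y|^{n+sp}}\, |x-y|^{n+sp} \leq d_\Omega^{n+sp}\, \frac{|u(x) - u(y)|^p}{|x-y|^{n+sp}}. \]
Combining the two displays gives $|u(x) - u_\Omega|^p \leq \frac{d_\Omega^{n+sp}}{|\Omega|}\int_\Omega \frac{|u(x)-u(y)|^p}{|x-y|^{n+sp}}\, dy$ for a.e.\ $x\in\Omega$.

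Finally I would integrate this inequality in $x$ over $\Omega$ and use Tonelli to identify the resulting double integral as $[u]_{W^{s,p}(\Omega)}^p$, obtaining $\|u - u_\Omega\|_{L^p(\Omega)}^p \leq \frac{d_\Omega^{n+sp}}{|\Omega|}\,[u]_{W^{s,p}(\Omega)}^p$; taking $p$-th roots yields the claimed bound. I do not expect a real obstacle here: the whole content is the Jensen step followed by the trivial estimate $|x-y|\le d_\Omega$, and the only care needed is the measurability/integrability bookkeeping (reducing to finite seminorm, justifying the average representation of $u-u_\Omega$, and invoking Tonelli).
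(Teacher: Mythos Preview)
Your argument is correct and is precisely the standard proof of this inequality. The paper itself does not give a proof of this proposition; it merely states the result and refers to \cite[Proposition 2.1]{poing}, so there is nothing further to compare.
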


\end{appendix}
\bibliography{biblio}
\bibliographystyle{plain}
\end{document}